\newtheorem{theorem}{Theorem}[section]
\newtheorem{proposition}[theorem]{Proposition}
\newtheorem{lemma}[theorem]{Lemma}
\newtheorem{example}[theorem]{Example} %
\newtheorem{remark}[theorem]{Remark}%
\newtheorem{definition}[theorem]{Definition}%
\DeclarePairedDelimiter\floor{\lfloor}{\rfloor}
\newcommand{\E}{\mathbb{E}}
\newcommand{\cl}{\mathcal}
\newcommand{\bb}{\mathbb}
\newcommand{\eqd}{\overset{d}{=}}
\newcommand{\eqfdd}{\overset{fdd}{=}}
\newcommand{\wt}{\widetilde}
\author{Shuyang Bai* \\   bsy9142@uga.edu  
   \and He Tang \\  ht11145@uga.edu  }
\begin{document}

\title{Joint sum-max limit for a class of long-range dependent processes with heavy tails}

\maketitle
\def\thefootnote{*}\footnotetext{Corresponding Author. The  authors are ordered alphabetically and contributed equally to this work.}

\begin{abstract}
We consider a class of   stationary   processes exhibiting both long-range dependence and  heavy tails.  Separate limit theorems for sums and   for extremes have been established recently in literature  with novel objects appearing in the limits.  
In this article,	we establish  the joint sum-max limit theorems   for this class of processes.  In the finite-variance case, the limit consists of two independent components: a fractional Brownian motion arising from the sum, and a long-range dependent random sup measure arising from the maximum. In the infinite-variance  case, we obtain in the limit two dependent components: a stable process  and a random sup measure whose dependence structure is described through the local time and range of a stable subordinator.  For establishing the limit theorem in the latter case, we also develop a  joint   convergence result for the local time and range of subordinators, which may be of independent interest.
\end{abstract}

{\it Keywords:} Asymptotic dependence,  infinitely divisible processes, long-range dependence, stable subordinator, 
weak convergence

Mathematics Subject Classification (2020) 60F17 (1st);   60G10(2nd)

\section{Introduction}\label{sec1}
%

%
%

Let $\{X_n\}=\{X_n\}_{n\in\mathbb{N}}$, $\mathbb{N}=\{1,2,\ldots\}$, be a sequence of stationary random variables. We are interested in the asymptotic behavior of the joint distribution of  partial sum $S_n=X_1+\ldots+X_n$ and partial maximum $M_n=\max(X_1,\ldots,X_n)$. 

The case when $X_n$'s are independent identically distributed (iid) was thoroughly studied by Chow and Teugels \cite{chow1978sum}. They showed that properly normalized  $(S_n,M_n)$   converges jointly in distribution to a non-degenerate limit  $(S,M)$    as $n\rightarrow\infty$, if and only if each marginal convergence holds. Furthermore,  $S_n$ and $M_n$ are asymptotically independent, namely,  $S$ and $M$ are independent, unless $X_n$ has a heavy tail on the positive side, that is, the tail    $\mathbb{P}(X_n>x)$  is  regularly varying  with index $-\alpha$, $\alpha
\in (0,2)$ as $x\rightarrow\infty$; see \cite{bingham1989regular} for the notion of regular variation.   A physical explanation for the asymptotic independence  is that each light-tailed individual $X_n$ is asymptotically negligible in $S_n$, and hence $M_n$ should play no role in determining $S$. In contrast, when $X_n$ has a heavy tail,    the asymptotic dependence of $S_n$ and $M_n$ is due to the fact that occasional extremely large observations  dominate  the sum of all others. In this case, marginally the limit $S$ follows an $\alpha$-stable distribution, and the limit $M$ follows an $\alpha$-Fréchet distribution,  while $S$ and $M$ are dependent. Writing $S_n(t)=S_{\floor{nt}}$ and  $M_n(t)=M_{\floor{nt}}$, where $\floor {x}$  denotes the  greatest integer not exceeding $x$, Chow and Teugels \cite{chow1978sum} also extended the result to the functional convergence of properly normalized $(S_n(t) ,M_n(t))_{t\ge 0}$ in  a Skorokhod space to a  joint limit process  $(S(t) ,M(t))_{t\ge 0}$.   In the heavy-tailed case, the limit processes $S(t)$ and $M(t)$ are dependent, while marginally $S(t)$ is an $\alpha$-stable Lévy process and $M(t)$ is  an   $\alpha$-Fréchet extremal process.

What if $\{X_n\}$ has  dependence? Typically, under  a proper  weak  dependence, or say,  a \emph{short-range dependence} assumption, and
if $X_n$ has light tails, the joint distribution of normalized $(S_n,M_n)$ behaves the same as that in the iid case. For instance, $S_n$ and $M_n$ are asymptotically independent  if $\{X_n\}$ satisfies certain strong mixing condition and has finite variance; see \cite{anderson1991joint, hsing1995note}.  However, if $X_n$ has a heavy tail on the positive side, the situation can be delicate: a dependence structure in $\{X_n\}$  can possibly lead to    cancellation of large terms in $S_n$, thus breaking down the asymptotic dependence observed in the iid case. Nevertheless,   $(S_n,M_n)$ is expected to still have the same limit distributional   behavior as in the iid case if such a cancellation  is ruled out \cite{anderson1995sums}. Recently, Krizmanić \cite{krizmanic2020joint} investigated the functional convergence of properly normalized      $( S_n(t) , M_n(t))_{t\ge 0} $  under  conditions which implied that clusters of extreme values can be broken down into asymptotically independent blocks,  and that all extremes within each cluster  have the same sign, and hence ruling out the aforementioned cancellation. 

There have been few studies on the joint asymptotic distribution  $(S_n,M_n)$ under strong dependence in $\{X_n\}$, or say \emph{long-range dependence}, except for $\{X_n\}$ being Gaussian. It is known that $1/\log n$ is the critical decay rate of the covariance function, which affects the asymptotic behavior of $(S_n,M_n)$. In particular, $S_n$ and $M_n$ are asymptotically independent when   $\mathrm{Cov}(X_{n+1},X_1)=o(1/\log n)$    with some additional regularity conditions   \cite{ho1996asymptotic, ho1999asymptotic}. If  $ \lim_{n\to\infty} \mathrm{Cov}(X_{n+1},X_1) \log n \in (0,\infty]$, then $S_n$ and $M_n$ are asymptotically dependent and the joint limit was obtained in \cite{ho1996asymptotic, ho1999asymptotic, mccormick2000asymptotic}. 

 On the other hand, the joint sum-max limit theorem for long-range dependent $\{X_n\}$ with heavy tails has not been considered as far as we know. It is worth clarifying  what we mean by ``long-range dependence'' more precisely here. In particular, the  boundary between short and long-range dependence of $\{X_n\}$  can be different for sum and for maximum. We shall follow the phase-transition perspective of \cite{samorodnitsky2016stochastic} towards long-range dependence, that is, the   normalization needed for obtaining non-degenerate limit  is of different order and the limit may also be of different nature compared to the iid case.  Typically, long-range dependence for sum is easier to occur compared to long-range dependence for maximum.  For instance, in the Gaussian case, long-range dependence for sum already takes place when the covariance  $\mathrm{Cov}(X_{n+1},X_1)$ decays like $n^{-\rho}$ with $\rho\in (0,1)$  (see, e.g., \cite{taqqu1975weak}), while it occurs for maximum only when $\mathrm{Cov}(X_{n+1},X_1)$   decays like $1/\log n$, i.e., the critical rate mentioned in the previous paragraph. In the heavy-tailed case, similar phenomenon also appears. For example, for certain stationary heavy-tailed $\{X_n\}$,  under a nonstandard normalization one could obtain the so-called linear fractional stable motion as a limit for sum, whereas the normalization order and the limit for maximum are still the same as the iid case; see, e.g., \cite[Theorems 9.5.7 \& 9.8.1]{samorodnitsky2016stochastic}. 
 In   this paper, we stress that long-range dependence is meant for both sum and maximum. 

In particular, we consider a class of stationary infinitely divisible processes $\{X_n\}$ with regularly varying tails whose dependence structure is determined by a null-recurrent Markov chain governed by a memory parameter $\beta\in(0,1)$. Models of this nature were first introduced in \cite{rosinski1996classes}  and have generated considerable interest since then.  These processes are regarded as having long-range dependence from the perspective of both sum and maximum.  They are associated with  the so-called conservative flows in ergodic theory; we refer to \cite{samorodnitsky2016stochastic} and the references therein for using the likewise concepts in ergodic theory to describe memory properties for stationary sequences. The asymptotic distribution of  the partial  sum process   $(S_{n}(t))_{t\geq 0}$    when $X_n$ has infinite variance was investigated in \cite{owada2015functional}. The limit theorem for the partial maximum process  $(M_{n}(t))_{t\geq 0}$, on the other hand, was initially considered in \cite{owada2015maxima} with a restricted  $\beta$ range.     Then \cite{lacaux2016time} pointed out that looking at  the max limit theorem from the perspective of random sup measures (cf.\ \cite{o1990stationary}) is more revealing. Put simply, such a perspective requires  examining  $M_n(B):=\max_{k/n\in B} X_k$   for different subsets  $B$ of $[0,\infty)$, whereas the partial maximum process  $M_{\floor{nt}}$ can be viewed  as restricting to subsets of the form $B=(0,t]$, $ t> 0$. From now on, the notation  $M_n$ shall always be understood as such a random sup measure. Samorodnitsky and Wang \cite{samorodnitsky2019extremal} took the random sup measure perspective   and established a limit theorem with full range of $\beta\in (0,1)$.

In this paper, we focus on the joint limit of normalized $(S_{n}(\cdot) ,M_n(\cdot) )$ in the space $D[0,1]\times SM[0,1]$, where $D[0,1]$ is the Skorokhod space equipped with $J_1$ topology and $SM[0,1]$ is the space of $\overline{\mathbb{R}}$-valued sup measures on $[0,1]$ equipped with the sup vague topology (\cite{vervaat1988random}).   We show that when $X_n$ has finite variance, $S_n$ and $M_n$ are asymptotically independent; when $X_n$ has infinite variance, $S_n$ and $M_n$ are asymptotically dependent with a dependence structure described by the local time and the range of a shifted stable subordinator (cf.\ \cite{bertoin1999subordinators}).

The paper is organized as follows. In Section \ref{sec2}, we  describe in detail our model $\{X_n\}$ and the limit objects. The main theorems on joint convergence are stated in Section \ref{sec3}. The proofs are included in Section \ref{sec5}.

\section{The setup}\label{sec2}

\subsection{The model and assumptions}\label{subsec1}

We follow essentially the same model as in \cite{samorodnitsky2019extremal}. Let $\{Y_n\}_{n\in\mathbb{N}_0}$ be an irreducible aperiodic null-recurrent Markov chain on $\mathbb{Z}$ with $\mathbb{N}_0 = \{0\}\cup \mathbb{N}=\{0,1,2,\ldots\}$. The trajectory of the Markov chain $(x_0,x_1,\cdots)$ belongs to the space 
{$(\mathsf{E}, \mathcal{E}) := (\mathbb{Z}^{\mathbb{N}_0},\mathcal{C}(\mathbb{Z}^{\mathbb{N}_0}))$,} 
where $\mathcal{C}(\mathbb{Z}^{\mathbb{N}_0})$ denotes for the cylindrical $\sigma$-field. Let $(\pi_i)_{i\in\mathbb{Z}}$ be the unique invariant measure on $\mathbb{Z}$ such that $\pi_{0} = 1$. Let $P_i$ denote the probability measure on {$(\mathsf{E}, \mathcal{E})$}  given by the law of the Markov chain starting at $Y_0 = i$. Define an infinite $\sigma$-finite measure {$\mu$} on {$(\mathsf{E}, \mathcal{E})$} by
$\mu({E}) :=\sum_{i\in\mathbb{Z}}\pi_iP_i({E})$, ${E} \in\mathcal{E}$. 
Let $T:\mathsf{E}\to  \mathsf{E}$ denote the left shift operator $T(x_0,x_1,\cdots) = (x_1,x_2,\cdots)$. Then $\mu$ is $T$-invariant. We consider a stationary process  of the following form: for a suitable measurable function $f: \mathsf{E} \rightarrow \mathbb{R}$, let
\begin{equation}
X_n=\int_{{\mathsf{E}}} f\circ T^{n}(s)M(ds),~~n\in\mathbb{N},\label{eq1}
\end{equation}
 where $M$ is a homogeneous symmetric infinitely divisible random measure on $({\mathsf{E}}, \mathcal{E})$ without a Gaussian component   which is characterized by 
{\[
	\E[e^{i \theta M(E)}]=\exp\left(-\mu(E)\int_{\mathbb{R}} (1-\cos(\theta y)) \rho(dy) \right), \quad E\in \mathcal{E}, \ \mu(E)<\infty,\ \theta\in \mathbb{R},
	\]}
where  $\rho$ on $(\mathbb{R},\mathcal{B}({\mathbb{R}}))$   is  a   Lévy measure satisfying $\rho(\{0\})=0$, $\int_{\mathbb{R}} (1\wedge x^2)  \rho(dx)<\infty $ and 
{$\rho(-S)=\rho(S)$ for any $S\in \mathcal{B}(\mathbb{R})$}, 
where $\mathcal{B}$ stands for Borel $\sigma$-field.  The resulting sequence $\{X_n\}$ is stationary, infinitely divisible and has symmetric marginal distribution.   See \cite[Chapter 3]{samorodnitsky2016stochastic} for more details about infinitely divisible processes, random measures, integrals and their relations. 

Write $\mathbf{x}:=(x_0,x_1,\cdots)$, and set $A:= \{ \mathbf{x}\in {\mathsf{E}} : x_0 = 0\}$ which satisfies $\mu(A)=\pi_0=1$. For the integrand function $f$ in \eqref{eq1}, we shall for simplicity assume   as in \cite{samorodnitsky2019extremal} that  
\[
f=\mathbf{1}_A.
\]
 Define the first entrance time of $A$ as
\begin{equation*}\label{eq:first entrance time}
\varphi_{A} (\mathbf{x}) := \inf\{n\in\mathbb{N}: x_n =0\},~~\mathbf{x}\in {\mathsf{E}}.
\end{equation*}
Throughout the paper, we assume that $P_{0} (\varphi_{A}> n)\in\text{RV}_\infty(-\beta)$ for some $\beta\in(0, 1)$. Here and below, $\text{RV}_\infty(-\beta)$ denotes the class of regularly varying functions/sequences at infinity with exponent $-\beta$. Similarly  $\text{RV}_0$  is used for regular variation at zero. The regular variation assumption on $P_{0}(\varphi_{A}> n)$ can also be expressed in terms of the wandering rate sequence defined by
\begin{equation}
w_n := \mu\left(\bigcup_{k=1}^{n} T^{-k}A\right),~~ n \in\mathbb{N}.\label{def1}
\end{equation}
It follows that
\begin{equation}
w_n=\sum^{n}_{k=1}P_{0} (\varphi_{A}\geq k)\sim \frac{nP_{0}(\varphi_{A}> n)}{1-\beta}\in \text{RV}_\infty(1-\beta);\label{ass1}
\end{equation}
see \cite{owada2015functional}. In addition, we assume that the   Lévy measure $\rho$ of $M$ satisfies for $x>0$, 
\begin{equation}
\rho\left((x,\infty)\right) \in \text{RV}_\infty(-\alpha)\label{ass2}
\end{equation} 
for some $\alpha>0$.   Furthermore, if $0<\alpha<2$, following \cite{bai2020functional}, we assume 
\begin{equation}
\rho\left((x,\infty)\right)= O(x^{-\alpha_0})~~\text{as }x\to 0,\label{eqasssum}
\end{equation} 
for some $\alpha_0\in (0,2)$. 

\subsection{The marginal limits}

In this section we review the known results about  limit theorems of $\{X_n\}$ in \eqref{subsec1}.
The limit  objects  are related to stable subordinators, the details of which we refer to \cite{bertoin1999subordinators}. Recall for $\beta\in(0,1)$, the standard $\beta$-stable subordinator $(S_\beta(t),t \geq 0)$ is defined as  a  Lévy process starting at the origin with Laplace exponent given by $\Phi(\lambda)=\lambda^\beta$, that is, $\mathbb{E}e^{-\lambda S_\beta (t)}= \exp\{-t\lambda^\beta\}$ for $\lambda\geq 0$ and $t \geq 0$. The process $S_\beta(t)$ is right-continuous and strictly increasing almost surely. The inverse stable subordinator, also known as  the Mittag-Leffler process, is   defined by
\begin{equation*}
M_\beta(t) := S^\leftarrow_\beta (t) = \inf\{u\geq 0 : S_\beta(u)\geq t\},~~ t \geq 0.
\end{equation*}
The process $M_\beta(t)$ is   continuous and non-decreasing almost surely, and the left-continuous inverse $S^\leftarrow_\beta (t)$ above can also be replaced with the right-continuous inverse {$S^\rightarrow_\beta(t)=\inf\{u\geq 0 : S_\beta(u)> t\}$}.    The closure of the range of $(S_\beta(t),\ t \geq 0)$ is called the $\beta$-stable regenerative set, denoted by $R^{(\beta)}$, which is regarded as a random element taking value in $\mathfrak{F}([0,\infty))$, the space of closed subsets of $[0,\infty)$ equipped with the Fell topology; see \cite{molchanov2005theory}  for  the details of   random sets theory.

Let throughout  $\{X_k\}$ be the infinitely-divisible stationary process   as described in Section \ref{subsec1}.
Define the partial sum process
\begin{equation}
S_n(t):=\sum_{k=1}^{\floor {nt}}X_k,~~t\geq 0.\label{sum}
\end{equation}
 It is known that the limit of normalized $S_n(t)$ depends on the tails of $X_k$. 

If $X_k$ has finite variance, which is equivalent to existence of the second moment of the L\'evy measure:
\begin{equation}
\int_{-\infty}^\infty x^2\rho(dx)<\infty,\label{ass3}
\end{equation} 
then
\begin{equation}
\frac{1}{nw_n^{-1/2}}(S_n(t))_{t\geq 0}\Rightarrow c_\beta (B_H(t))_{t\geq 0}~~\text{as }n\to\infty\label{eqancon}
\end{equation} 
 on $D[0,\infty)$ equipped with   the Skorokhod $J_1$ topology, where $w_n$ is defined by (\ref{def1}), 
\begin{equation}
c_\beta^2=\int_{-\infty}^\infty x^2\rho(dx)\frac{\Gamma(1+2\beta)}{\Gamma(2-\beta)\Gamma(2+\beta)}\mathbb{E}\left(\{S_\beta(1)\}^{-2\beta}\right),\label{con}
\end{equation} 
and $B_H(t)$ is the fractional Brownian motion with $H =(1+\beta)/2$, satisfying $\mathbb{E}\{B_H(t)^2\}= t^{2H}/2, t\geq 0$; see \cite[ Theorem 9.4.7]{samorodnitsky2016stochastic}.  Notice that in \eqref{eqancon} the normalization sequence $(nw_n^{-1/2})\in \text{RV}_\infty(H)$  in view of \eqref{ass1}, and is hence of higher order compared to $n^{1/2}$ for iid case.

When $X_k$ has infinite variance  satisfying \eqref{ass2} {with $0<\alpha<2$ and \eqref{eqasssum}}, define
\begin{equation}
\rho^{\leftarrow}(y):= \inf\{x > 0 : \rho\left((x,\infty)\right)\leq y\}, ~~y > 0,\label{inv}
\end{equation}
  the generalized inverse of the tail of the   L\'{e}vy measure  which belongs to $\mathrm{RV}_0(-1/\alpha)$. 
 Then we have
\begin{equation*}
\frac{1}{\rho^{\leftarrow}(w_n^{-1})nw_n^{-1}}(S_n(t))_{t\geq 0}\Rightarrow \left(\Gamma(2-\beta)\right)^{-1} C_\alpha^{-1/\alpha} (Y_{\alpha,\beta}(t))_{t\geq 0}~~\text{as }n\to\infty
\end{equation*} 
in the Skorokhod space $D[0,\infty)$ equipped with the  $J_1$ topology, where  the limit process    $Y_{\alpha,\beta}$ is a  symmetric $\alpha$-stable (S$\alpha$S) process which will be described below, the normalization   sequence $\left(\rho^{\leftarrow}(w_n^{-1})nw_n^{-1}\right) \in \mathrm{RV}(\beta+(1-\beta)/\alpha)$, and
\begin{equation} 
C_\alpha=\begin{cases}
\left(\Gamma(1-\alpha)\cos(\pi\alpha/2)\right)^{-1}&\text{ if }\alpha\neq 1,\\
2/\pi &\text{ if }\alpha= 1; 
\end{cases}\label{stacon}
\end{equation} 
see \cite[{Theorem 5.1 \& Example 5.5}]{owada2015functional} as well as \cite{bai2020functional}.                                                                                                             To describe the limit process $Y_{\alpha,\beta}$, $0 <\alpha< 2$, $0 <\beta< 1$, introduce a probability space $(\Omega',\mathcal{F}',P')$  different from the underlying probability space.  Let $(M_\beta(t,\omega'))_{t\ge 0}$ be a Mittag-Leffler process defined on $(\Omega',\mathcal{F}',P')$, $\omega'\in \Omega'$. Suppose $\nu$ is a measure on $[0,\infty)$ given by $\nu(dx) = (1-\beta)x^{-\beta} dx$, $x > 0$.   Now define  
\begin{equation}
Y_{\alpha,\beta}(t) :=\int_{\Omega'\times[0,\infty)}M_\beta\left((t- x)_+,\omega'\right)dZ_{\alpha,\beta}(\omega',x),~~ t\geq 0,\label{sumlim}
\end{equation} 
where $Z_{\alpha,\beta}$ is a S$\alpha$S   random measure on $\Omega'\times[0,\infty)$ with control measure $m:=P'\otimes \nu$, characterized by $\E \exp(i\theta  Z_{\alpha,\beta}(A))=\exp(-|\theta|^{\alpha}m(A))$ for $\theta\in \mathbb{R}$, 
{$A\in \mathcal{F}'\otimes\mathcal{B}([0,\infty))$ }
with $m(A)<\infty$.    

Next we review the limit theorems for the   maximum. For any $n\in\mathbb{N}$, introduce  
\begin{equation}
M_n(B):=\max_{k\in (n B)\cap \bb{N}}X_k,~~ B\in\mathcal{G}([0,\infty)),\label{max}
\end{equation}
with the understanding     $\max_\emptyset:=-\infty$, and  $\mathcal{G}([0,\infty))$ denotes the collection of open subsets of $[0,\infty)$  under the subspace topology. If (\ref{ass2}) holds for some $\alpha>0$ and the following regularity condition holds:
\begin{equation}
\sup_{n\in\mathbb{N}}\frac{nP_{0} (\varphi_{A}=n)}{P_{0} (\varphi_{A}> n)}<\infty,\label{eqassmax}
\end{equation}
then    \cite{samorodnitsky2019extremal} established the following limit theorem:
\begin{equation*}
\frac{1}{\rho^{\leftarrow}(w_n^{-1})}(M_n(B))_{B\in \mathcal{G} ([0,\infty))}\Rightarrow (\eta_{\alpha,\beta}(B))_{B\in \mathcal{G} ([0,\infty))}~~\text{as }n\to\infty,\label{eqbncon}
\end{equation*}
 in the sup measure space $SM[0,\infty)$ equipped with the sup vague topology, where the limit random sup measure $\eta_{\alpha,\beta}$ will be defined below in \eqref{limsm}; we refer to   \cite{vervaat1988random,o1990stationary} for the definition  of the sup vague topology and the  details of the theory of  random sup measures. In the limit theorem above,  the normalization   sequence $(\rho^{\leftarrow}(w_n^{-1})) \in \mathrm{RV}_\infty ((1-\beta)/\alpha)$.
To define   $\eta_{\alpha,\beta}$,  consider a Poisson point process on the product space
$[0,\infty) \times [0,\infty) \times \mathfrak{F}([0,\infty))$ with mean measure
$\alpha u^{-(1+\alpha)} du(1-\beta)v^{-\beta} dvdP_{R^{(\beta)}}$, where $P_{R^{(\beta)}}$ is the law of the $\beta$-stable regenerative set. Let $(U_j , W_j , F_j )_{j\in\mathbb{N}}$ denote a measurable enumeration of the points of the point process, and define
$\widetilde{F}_j:=W_j +F_j:=\{W_j+x:\ x\in F_j\}$, $j \in\mathbb{N}$.
For   $B\in \mathcal{G} ([0,\infty))$, the limit random sup measure is determined by  
\begin{equation}\label{limsm}
\eta_{\alpha,\beta}(B)=\sup_{t\in B} \left( \sum_{j=1}^\infty U_j \mathbf{1}_{\{t\in\widetilde{F}_j\}}\right),~~ B\in \mathcal{G} ([0,\infty)).
\end{equation}
It is known that for $\beta\in (0,1/2]$, the random variable $\eta_{\alpha,\beta}(B)$ marginally follows  an $\alpha$-Fréchet distribution, which is no longer the case if $\beta\in (1/2,1)$.

\section{Main results}\label{sec3}

Throughout we assume that {$\{X_n\}$} is the stationary infinitely-divisible process with symmetric and regularly varying tails as described in Section \ref{subsec1}.  For simplicity, we formulate our results restricting to the unit time interval $[0,1]$, although they can also be   extended to the half interval $[0,\infty)$.

\subsection{Joint convergence of processes with finite variance}\label{subsec2}

When the variance of $X_n$ is finite, we show that the sum $S_n(t)$ and the maximum $M_n(B)$ are asymptotically independent as described by the   theorem below. Let $\cl{G}([0,1])$ denote the open subsets of $[0,1]$ with respect to the subspace topology inherited from $\mathbb{R}$.

\begin{theorem}\label{thmfv}
	Let $\{X_n\}_{n\in\mathbb{N}}$ be a stationary process defined by $(\ref{eq1})$ with $f=\mathbf{1}_A$. Let $(S_n(t))_{t\in [0,1]}$ be defined by $(\ref{sum})$ and $(M_n(B))_{B\in \cl{G}([0,1])}$ be defined by $(\ref{max})$. Suppose the  regular variation  conditions $(\ref{ass1})$ and   $(\ref{ass2})$ hold for $0<\beta<1$ and $\alpha\ge 2$, respectively. Assume the finite variance condition $(\ref{ass3})$ and the regularity condition  $(\ref{eqassmax})$. Then
	\begin{equation*}
	\begin{pmatrix}
	(n^{-1}w_n^{1/2}S_n(t))_{t\in[0,1]}\\
	(b_n^{-1}M_n(B))_{B\in \cl{G}([0,1])}
	\end{pmatrix}\Rightarrow\begin{pmatrix}
	(c_\beta B_H(t))_{{t\in[0,1]}}\\
	(\eta_{\alpha,\beta}(B))_{B\in \cl{G}([0,1])}
	\end{pmatrix} ~~\text{as }n\to\infty
	\end{equation*} 
	weakly in the  product space $D[0,1]\times SM[0,1]$, with $D[0,1]$  equipped with the Skorokhod  $J_1$ topology and $SM[0,1]$ equipped with the sup vague topology, where $w_n$ is defined by $(\ref{def1})$, $b_n=\rho^{\leftarrow}(w_n^{-1})$ for $\rho^{\leftarrow}$ defined by $(\ref{inv})$ and $c_\beta$ is defined by $(\ref{con})$, $B_H$ is the fractional Brownian motion with $H =(1+\beta)/2$  satisfying $\mathbb{E}\{B_H(t)^2\}= t^{2H}/2, t\geq 0$,  and $\eta_{\alpha,\beta}$ is a random sup measure defined by $(\ref{limsm})$. Furthermore, $B_H$ and $\eta_{\alpha,\beta}$ in the limit are independent.
\end{theorem}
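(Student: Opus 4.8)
The plan is to obtain the joint limit from the two marginal limit theorems already available, namely \eqref{eqancon} for the sum and the theorem of \cite{samorodnitsky2019extremal} for the maximum, by a truncation of the L\'evy measure that decouples them. Since $SM[0,1]$ is compact metrizable and $D[0,1]$ is Polish, joint tightness is automatic once each marginal converges, so the substance is the identification of the joint limit and the asymptotic independence. First I would fix $c>0$ and write $M = M_1^{(c)} + M_2^{(c)}$, where $M_1^{(c)}$ and $M_2^{(c)}$ are \emph{independent} homogeneous symmetric infinitely divisible random measures on $(E,\mathcal{E})$ with local L\'evy measures $\rho\,\mathbf{1}_{[-c,c]}$ and $\rho\,\mathbf{1}_{\mathbb{R}\setminus[-c,c]}$ respectively (a standard decomposition, cf.\ \cite[Ch.\ 3]{samorodnitsky2016stochastic}). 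This induces $X_k = X_k^{(c),1} + X_k^{(c),2}$ with $X_k^{(c),i} = \int_E \mathbf{1}_A\circ T^k\, dM_i^{(c)}$, so that $\{X_k^{(c),1}\}_k$ and $\{X_k^{(c),2}\}_k$ are independent stationary processes of the form \eqref{eq1} with $f=\mathbf{1}_A$; I write $S_n^{(c),i}$ and $M_n^{(c),i}$ for the corresponding partial-sum processes and max sup measures.

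The next step is to show that the small jumps carry the sum while the large jumps carry the maximum. Both truncated processes still satisfy \eqref{ass1} (which involves only $\varphi_A$) and the finite-variance condition \eqref{ass3}, and since $\sigma^2_{\le c} := \int x^2\,\rho\,\mathbf{1}_{[-c,c]}(dx) \uparrow \int x^2\rho(dx)$ while $\sigma^2_{>c} := \int x^2\,\rho\,\mathbf{1}_{\mathbb{R}\setminus[-c,c]}(dx) \downarrow 0$ as $c\to\infty$, applying \eqref{eqancon} to each gives, on $D[0,1]$, $n^{-1}w_n^{1/2}S_n^{(c),1} \Rightarrow c_\beta^{(c)}B_H$ and $n^{-1}w_n^{1/2}S_n^{(c),2} \Rightarrow \widetilde{c}_\beta^{(c)}B_H$, where $(c_\beta^{(c)})^2 = \sigma^2_{\le c}\,\kappa_\beta$, $(\widetilde{c}_\beta^{(c)})^2 = \sigma^2_{>c}\,\kappa_\beta$, and $\kappa_\beta := \frac{\Gamma(1+2\beta)}{\Gamma(2-\beta)\Gamma(2+\beta)}\mathbb{E}(\{S_\beta(1)\}^{-2\beta})$ is the $\beta$-factor in \eqref{con}; in particular $c_\beta^{(c)}\uparrow c_\beta$ and $\widetilde{c}_\beta^{(c)}\downarrow 0$. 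Applying the $J_1$-continuous functional $x\mapsto \sup_{[0,1]}|x|$ to the second convergence together with the Portmanteau theorem then yields $\lim_{c\to\infty}\limsup_{n\to\infty}\mathbb{P}(\sup_{[0,1]}|n^{-1}w_n^{1/2}S_n^{(c),2}| > \epsilon) = 0$ for each $\epsilon>0$. On the max side, $\rho\,\mathbf{1}_{\mathbb{R}\setminus[-c,c]}$ has the same tail at infinity as $\rho$, so \eqref{ass2} holds with the same $\alpha$ and $\rho^{\leftarrow}$ is unchanged near $0$; since \eqref{ass1} and \eqref{eqassmax} are also unaffected, the theorem of \cite{samorodnitsky2019extremal} applies to $\{X_k^{(c),2}\}$ and gives $b_n^{-1}M_n^{(c),2} \Rightarrow \eta_{\alpha,\beta}$ in $SM[0,1]$. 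Finally, $\sup_{B}|M_n(B) - M_n^{(c),2}(B)| \le \max_{1\le k\le n}|X_k^{(c),1}|$, and $X_1^{(c),1}$ is infinitely divisible with a compactly supported L\'evy measure, hence has finite exponential moments of all orders and thus tails decaying faster than any power; since $b_n\in\mathrm{RV}_\infty((1-\beta)/\alpha)$ grows at least like a positive power of $n$, a union bound gives $\mathbb{P}(\max_{1\le k\le n}|X_k^{(c),1}| > \epsilon b_n)\le n\,\mathbb{P}(|X_1^{(c),1}| > \epsilon b_n)\to 0$, so a routine perturbation estimate in the sup vague topology transfers the limit, and more to the point shows that $b_n^{-1}M_n$ and $b_n^{-1}M_n^{(c),2}$ share the same joint limiting behaviour when paired with any tight sequence.

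The final step is to assemble the joint limit. Independence of $\{X_k^{(c),1}\}_k$ and $\{X_k^{(c),2}\}_k$ makes $n^{-1}w_n^{1/2}S_n^{(c),1}$ independent of $b_n^{-1}M_n^{(c),2}$ for each $n$; combined with the marginal convergences above and the fact that independence together with marginal weak convergence forces joint weak convergence to independent limits (test against products $f(x)g(m)$, which are convergence-determining on the Polish space $D[0,1]\times SM[0,1]$), this gives $(n^{-1}w_n^{1/2}S_n^{(c),1},\, b_n^{-1}M_n^{(c),2}) \Rightarrow (c_\beta^{(c)}B_H,\, \eta_{\alpha,\beta})$ with the two components independent, and after absorbing the negligible perturbation, $(n^{-1}w_n^{1/2}S_n^{(c),1},\, b_n^{-1}M_n) \Rightarrow (c_\beta^{(c)}B_H,\, \eta_{\alpha,\beta})$, still with independent components. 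Writing $\Xi_n := (n^{-1}w_n^{1/2}S_n,\, b_n^{-1}M_n)$ and $\Xi_n^{(c)} := (n^{-1}w_n^{1/2}S_n^{(c),1},\, b_n^{-1}M_n)$, the product-metric distance between $\Xi_n$ and $\Xi_n^{(c)}$ is at most $\sup_{[0,1]}|n^{-1}w_n^{1/2}S_n^{(c),2}|$ (the $J_1$ metric being dominated by the uniform one), which is negligible in the iterated limit by the bound above; since $\Xi_n^{(c)} \Rightarrow (c_\beta^{(c)}B_H, \eta_{\alpha,\beta})$ as $n\to\infty$ and $(c_\beta^{(c)}B_H, \eta_{\alpha,\beta}) \Rightarrow (c_\beta B_H, \eta_{\alpha,\beta})$ as $c\to\infty$ (because $c_\beta^{(c)}B_H\to c_\beta B_H$ in $D[0,1]$ and $\eta_{\alpha,\beta}$ is fixed), with the two components independent throughout, the standard convergence-together theorem (Billingsley, \emph{Convergence of Probability Measures}, Theorem~3.2) yields $\Xi_n \Rightarrow (c_\beta B_H, \eta_{\alpha,\beta})$; the independence of $c_\beta B_H$ and $\eta_{\alpha,\beta}$ is inherited in the limit since the factorized characteristic functional passes through, which completes the proof.

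I expect the main obstacle to be producing the asymptotic independence, for which the truncation of the L\'evy measure and the resulting independence of $M_1^{(c)}$ and $M_2^{(c)}$ are the essential device; concretely, the two points requiring care are the perturbation estimate in the sup vague topology --- so that the uniformly $o_P(b_n)$ small-jump contribution to the maximum does not alter its $SM[0,1]$-limit --- and the verification that the known marginal theorems truly apply to each truncated process (the tail condition \eqref{ass2} and the regularity condition \eqref{eqassmax} survive the truncation, the variance splits additively, and this yields the monotone limits $c_\beta^{(c)}\uparrow c_\beta$ and $\widetilde{c}_\beta^{(c)}\downarrow 0$).
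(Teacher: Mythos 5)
Your proposal is correct and follows essentially the same route as the paper: the same truncation of the L\'evy measure into independent small-jump and large-jump random measures, the same identification that the bounded-L\'evy-measure piece carries the fractional Brownian limit for the sum while its maximum is super-polynomially negligible against $b_n$ (via the exponential tail bound for infinitely divisible laws with compactly supported L\'evy measure), and the same triangular ``convergence together'' argument to remove the truncation. The only cosmetic difference is that you assemble the joint limit directly in the product space via independence and product test functions, whereas the paper first reduces to finite-dimensional convergence through its Proposition on the ``fdd $+$ tightness'' criterion; both are valid.
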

The proof of Theorem \ref{thmfv} can be found in Section \ref{sec:pf finite var}. We shall for simplicity omit treating the possible  case with $\alpha=2$ and  $X_n$ having an infinite variance. Such a subtle case requires a modification of the normalization $n^{-1}w_n^{1/2}$ by an additional diverging slowly varying factor. Other than this  change, we expect the limit theorem is of the same nature as Theorem \ref{thmfv}, namely, a fractional Brownian motion limit still appears in the limit for the sum component, although this case is not covered by \cite[Theorem 9.4.7]{samorodnitsky2016stochastic}.

\subsection{Joint convergence of processes with infinite variance}\label{subsec3}

When $0<\alpha<2$, we shall show that the sum $S_n(t)$ and the maximum $M_n(B)$ are asymptotically dependent. We first introduce some necessary ingredients for describing the joint limit.

For each subordinator $\sigma$, define the (right-continuous) inverse process, or say the local time process,   as 
\begin{equation}
L_{\sigma}(x) =  \inf\{t \geq 0 : \sigma(t) > x\},~~ x \geq 0\label{eqlocal}
\end{equation}
and the closed range of $\sigma$ as 
\begin{equation}
R_{\sigma} = \overline{\{\sigma(t): t\ge 0\} },\label{eqrange}
\end{equation}
where the bar above denotes the closure.

To express the joint limit, it is   convenient to use the so-called series representations (cf.\ \cite[Section 3.4]{samorodnitsky2016stochastic}). Let {$\{\varepsilon_j\}_{j\in\mathbb{N}}$} be a sequence of iid Rademacher random variables and {$\{\Gamma_j\}_{j\in\mathbb{N}}$} be an ordered sequence of standard Poisson arrivals on $[0,\infty)$.  
Let $\{\sigma_j\}_{j\in\mathbb{N}}$ be  a sequence of iid\ copies of standard $\beta$-stable subordinators and $\{V_j\}_{j\in\mathbb{N}}$ be a  sequence of iid\ random variables in $[0,1]$ with common distribution 
\begin{equation*}\label{eq:V CDF}
\mathbb{P}(V\leq x) = x^{1-\beta},  \quad x\in[0,1].
\end{equation*} All aforementioned random sequences   are assumed to be independent of each other.
Denote the local time process and the closed range of $\sigma_j$ by $L_j=L_{\sigma_j}$ and $R_j=R_{\sigma_j}$, respectively. Let $\widetilde{R}_j:=V_j+R_j:=\{ V_j+x:\ x\in R_j\}$ be the random set $R_j$ shifted by $V_j$. Note that $0\in R_j$ and hence $V_j\in \widetilde{R}_j$. Introduce the series representations
\begin{equation}
S(t):=(2{ C_\alpha})^{1/\alpha}\sum_{j=1}^\infty\varepsilon_j\Gamma_j^{-1/\alpha}L_{j}\left((t-V_{j})_+\right),~~t\in[0,1],\label{limsum}
\end{equation}
where $C_\alpha$ is as in  \eqref{stacon}. {The series in \eqref{limsum} converges almost surely (cf.\ \cite{bai2020functional}; see also \cite[Section 3.4]{samorodnitsky2016stochastic} for general information about series representation of infinitely divisible processes)}. Introduce
 the random sup measure 
\begin{equation}\label{limmax}
M(B):=\sup_{t\in B} \left( \sum_{j=1}^\infty\Gamma_j^{-1/\alpha}\mathbf{1}_{\{t\in\widetilde{R}_j\}} \right),\quad B\in \cl{G}([0,1]).
\end{equation}
Define  
\begin{equation}\label{eq:I_S}
I_S:=\begin{cases}
\bigcap_{j\in S}\widetilde{R}_j\cap[0,1],\quad  &\emptyset \neq S\subset\{1,2,\cdots\} \\
 [0,1], \quad &S=\emptyset. \
\end{cases}
\end{equation}  
With $|S|$ denoting the cardinality of $S$,
it is known that  the intersection   $I_S $ above is nonempty almost surely only when $1\le |S|\le \ell_\beta$, $\ell_\beta:=\max\{\ell\in\mathbb{N}:\ell<(1-\beta)^{-1}\}$; see \cite[Corollary B.3]{samorodnitsky2019extremal}.
An equivalent representation of $M$ which is useful in the proofs is given by 
\begin{equation}
M(B)=\sup_{S\subset\{1,2,\cdots\} }\mathbf{1}_{\{I_S\cap B\neq\emptyset\}}\sum_{j\in S}\Gamma_j^{-1/\alpha},~~B\in \cl{G}([0,1]),\label{maxidx}
\end{equation}
where the equality holds almost surely for any   $B\in \cl{G}([0,1])$. {In \eqref{maxidx} almost surely the   supremum   can be attained at some (random) subset $S$   with $|S|\le \ell_\beta$.}
See   \cite{samorodnitsky2019extremal} and also \cite[Proposition 3.1]{bai2021phase}.
Now we state our main result in the infinite variance case.

\begin{theorem}\label{thmmain}
	Let $\{X_n\}_{n\in\mathbb{N}}$ be a stationary process defined by $(\ref{eq1})$ with $f=\mathbf{1}_A$. Suppose $(\ref{ass1})$ and $(\ref{ass2})$ hold for $0<\beta<1$ and $0<\alpha<2$, respectively. Let $(S_n(t))_{t\in [0,1]}$ be defined in $(\ref{sum})$ and $(M_n(B))_{B\in \cl{G}([0,1])}$ be defined in $(\ref{max})$. Assume that $(\ref{eqasssum})$ holds for some $\alpha_0<2$ and $(\ref{eqassmax})$ holds. Then
	\begin{equation*}
	\begin{pmatrix}
	(c_n^{-1}S_{n}(t))_{t\in [0,1]}\\
	(b_n^{-1}M_{n}(B))_{B\in \cl{G}([0,1])}
	\end{pmatrix} \Rightarrow\begin{pmatrix}
	(C_\alpha^{-1/\alpha}S(t))_{t\in [0,1]}\\
	(M(B))_{B\in \cl{G}([0,1])}
	\end{pmatrix} ~~\text{as }n\to\infty\label{thmmaineq1}
	\end{equation*}
	weakly in $D[0,1]\times SM[0,1]$, where $c_n=\rho^{\leftarrow}(w_n^{-1})nw_n^{-1}/\Gamma(2-\beta)$, $b_n=\rho^{\leftarrow}(w_n^{-1})$ with $w_n$ as in $(\ref{def1})$ and $\rho^{\leftarrow}$ as in $(\ref{inv})$, $C_\alpha$ is as in  \eqref{stacon}, and  the limit process $S$ and random sup measure $M$ are as in $(\ref{limsum})$ and $(\ref{limmax})$.
\end{theorem}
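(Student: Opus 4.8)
The plan is to follow the standard paradigm for series-representation limit theorems: establish convergence of finite-dimensional distributions first, then upgrade to functional weak convergence via tightness. For the fidi convergence, I would fix finitely many times $t_1,\ldots,t_p\in[0,1]$ for the sum and finitely many open sets $B_1,\ldots,B_q$ for the maximum, and study the joint law of the vector $(c_n^{-1}S_n(t_i),\, b_n^{-1}M_n(B_k))$. The key is that both $S_n$ and $M_n$ are built from the \emph{same} infinitely divisible random measure $M$ via \eqref{eq1}, so their joint characteristic functional/Laplace-type functional has an explicit form as $\exp(-\int_E \Psi(\cdots)\,\mu(ds))$ where $\Psi$ involves the Lévy measure $\rho$. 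The heavy-tail condition \eqref{ass2} combined with the truncation idea (using \eqref{eqasssum} for the small jumps, exactly as in \cite{bai2020functional, owada2015functional}) lets one localize: the dominant contribution comes from the large values of $M$, which under the regenerative structure of the null-recurrent Markov chain cluster along excursions. One then shows that, after normalizing $M$-points by $b_n$, the relevant point process on $E$ converges (via a point-process convergence argument on the excursion space, à la \cite{samorodnitsky2019extremal}) to a Poisson point process whose points carry both a jump magnitude and an excursion shape. The sum functional reads off $L_j$ (local time, which is how an excursion of length $\sigma_j$ contributes to partial sums up to time $t$), and the max functional reads off $\widetilde R_j$ (the range, i.e.\ which times $t$ are "covered" by the excursion). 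This is precisely the mechanism producing the common randomness $(\sigma_j, V_j, \Gamma_j)$ in \eqref{limsum}–\eqref{limmax}, hence the asymptotic dependence.

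The main technical engine I expect to need, and which the abstract flags as of independent interest, is a \emph{joint} convergence result for the local time $L_\sigma$ and the range $R_\sigma$ of a sequence of (appropriately rescaled, shifted) subordinators — this is what glues the sum-limit and max-limit together. The earlier marginal results (Owada–Samorodnitsky for the sum, Samorodnitsky–Wang for the max) each use only one of these two functionals; here I would prove that the pair $(L_{\sigma_j}, R_{\sigma_j})$, suitably discretized from the Markov-chain excursions, converges jointly in the relevant function/closed-set topology to the pair $(L_j, R_j)$ attached to the stable subordinator. With this in hand, the limiting joint functional factorizes over $j$ (independence across Poisson points) and matches the functional computed directly from \eqref{limsum} and \eqref{limmax}. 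The wandering-rate asymptotics \eqref{ass1} supply the exact normalization constants $c_n$ and $b_n$ and the measure $\nu(dx)=(1-\beta)x^{-\beta}dx$ governing the shifts $V_j$.

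For the functional upgrade, I would argue tightness of $(c_n^{-1}S_n)$ in $D[0,1]$ under $J_1$ using a moment/maximal-inequality bound — here the restriction $\alpha_0<2$ in \eqref{eqasssum} is what makes a second-moment truncation argument work, controlling the small-jump part exactly as in \cite{bai2020functional} — while $J_1$ (rather than the weaker $M_1$) is legitimate because the clustering within a single excursion all has the same sign, so no cancellation within a jump block occurs. Tightness of $(b_n^{-1}M_n)$ in $SM[0,1]$ under the sup vague topology is handled as in \cite{samorodnitsky2019extremal}. Joint tightness then follows from marginal tightness, and combined with the fidi convergence this gives the claimed weak convergence in the product space $D[0,1]\times SM[0,1]$. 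I expect the hardest step to be the joint local-time/range convergence for the subordinator approximations together with verifying that the limiting functional one obtains from the point-process limit genuinely coincides with the functional of $(S,M)$ as defined in \eqref{limsum}–\eqref{limmax} — in particular handling the intersections $I_S$ in \eqref{eq:I_S} and ensuring the a.s.\ finiteness $|S|\le\ell_\beta$ is respected in the limit, so that the max part is well-defined and the dependence with the sum part is correctly captured.
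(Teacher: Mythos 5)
Your proposal is correct in substance and follows essentially the same architecture as the paper: fdd convergence plus marginal tightness (with $SM[0,1]$ compact and the $D[0,1]$ tightness imported from \cite{owada2015functional,bai2020functional}), reduction of the joint law to a Poisson structure indexed by excursions, the joint local-time/range convergence of subordinators (Proposition \ref{thmsub}) as the technical engine gluing the sum and max components, and a truncation-plus-triangular-approximation (\cite[Theorem 3.2]{billingsley1999convergence}) to pass from finitely many Poisson points to the full limit; you also correctly identified the role of \eqref{eqasssum} and the need to handle the intersections $I_S$ with $|S|\le\ell_\beta$. The one place your framing would stall is the opening move: you propose to write the \emph{joint} characteristic/Laplace functional of $(S_n(t_i),M_n(B_k))$ as $\exp(-\int_E\Psi\,d\mu)$. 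The sum is a linear functional of the ID random measure and admits such a form, but the maximum is not, so no closed-form joint exponential functional exists to expand. The paper circumvents this by invoking the distributional series representation of \cite[Theorem 3.4.3]{samorodnitsky2016stochastic}: for each fixed $n$, $(X_k)_{k\le n}$ equals in law $\sum_j\varepsilon_j\rho^{\leftarrow}(\Gamma_j/(2w_n))\mathbf{1}_{\{U_j^{(n)}\in T^{-k}A\}}$, from which $S_n$ and $M_n$ are read off pathwise and simultaneously (see \eqref{eq:equiv rep}); your pivot to point-process convergence on the excursion space is essentially this same object in different clothing. One implementation device you omit but would need: the scaled entrance times of $U_j^{(n)}$ form a stopped renewal process, not a subordinator, so the paper Poissonizes them (composition with an independent Poisson clock, \eqref{sjn}) to manufacture genuine compound Poisson subordinators $\sigma_{j,n}$ to which Proposition \ref{thmsub} applies, and then controls the discrepancy between the renewal count and the Poissonized local time by a Chebyshev bound (Lemma \ref{lemsum}), with the corresponding max discrepancy handled in Lemma \ref{lemmax}. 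These are fixable details rather than conceptual gaps.
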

The proof of Theorem \ref{thmmain} can be found in Section \ref{sec:pf inf var}.

\begin{remark}
	The limit process $Y_{\alpha,\beta}$ described by  the stochastic integral in $(\ref{sumlim})$ when restricted to time interval $[0,1]$ is equal in law to the  series representation $S$ in \eqref{limsum}; see \cite[Example 3.4.4]{samorodnitsky2016stochastic}, and also  \cite{bai2020functional}.      The random sup measure $\eta_{\alpha,\beta}$ defined in $(\ref{limsm})$ restricted on $[0,1]$ has the same law with the random sup measure $M$ defined in $(\ref{limmax})$; 
	see   equation (3.7) of \cite{samorodnitsky2019extremal}.
\end{remark}

The following proposition confirms that in contrast to Theorem \ref{thmfv}, the limits in Theorem \ref{thmmain} are dependent.
\begin{proposition}\label{Pro:dep}
	$(S(t))_{t\in[0,1]}$ defined in $(\ref{limsum})$ and $(M(B))_{B\in\mathcal{G}([0,1])}$ defined in $(\ref{limmax})$ are dependent. 
\end{proposition}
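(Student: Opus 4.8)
The plan is to argue by contradiction. Suppose $(S(t))_{t\in[0,1]}$ and $(M(B))_{B\in\mathcal{G}([0,1])}$ were independent. Since $S(1)$ is an almost surely finite (symmetric $\alpha$-stable) random variable by the Remark following Theorem \ref{thmmain}, it then suffices to produce a single constant $c>0$ with $\mathbb{P}(|S(1)|>K)\ge c$ for \emph{every} $K>0$, which is absurd. Note this route uses only the tail of $|S(1)|$ and its finiteness, so no moment assumptions are needed, which matters since $S(1)$ has no second moment when $\alpha<2$. The whole mechanism rests on one observation: both the total mass $M([0,1])$ and the leading summand of the series \eqref{limsum} are governed by $\Gamma_1$, so an event forcing $M([0,1])$ to be huge simultaneously forces a single summand of $S(1)$ to be huge. (The choice $t=1$ is only for concreteness; any $t_0\in(0,1]$ works.)

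\emph{Step 1 (sandwiching $M([0,1])$ by $\Gamma_1^{-1/\alpha}$).} Because $0\in R_1$ we have $V_1\in\widetilde{R}_1$, so evaluating \eqref{limmax} at $t=V_1$ gives $M([0,1])\ge\Gamma_1^{-1/\alpha}$. Conversely, by the representation \eqref{maxidx}, the fact that $I_S\neq\emptyset$ almost surely only when $|S|\le\ell_\beta$ (\cite[Corollary B.3]{samorodnitsky2019extremal}), and monotonicity of $(\Gamma_j)$, one gets $M([0,1])\le\sum_{j=1}^{\ell_\beta}\Gamma_j^{-1/\alpha}\le\ell_\beta\,\Gamma_1^{-1/\alpha}$ almost surely. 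Hence for every $\lambda>0$,
\[
\{\Gamma_1<\lambda^{-\alpha}\}\ \subseteq\ \{M([0,1])>\lambda\}\ \subseteq\ \{\Gamma_1<(\ell_\beta/\lambda)^{\alpha}\},
\]
so $\mathbb{P}(M([0,1])>\lambda)>0$ for all $\lambda$, and the ratio of the probabilities of the two outer events stays bounded (in fact tends to $\ell_\beta^{\alpha}$) as $\lambda\to\infty$.

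\emph{Step 2 (conditionally on $\{M([0,1])>\lambda\}$, a summand of $S(1)$ exceeds $K$).} Write $a_1:=(2C_\alpha)^{1/\alpha}\Gamma_1^{-1/\alpha}L_1((1-V_1)_+)$ for the $j=1$ term of \eqref{limsum} and $R:=S(1)-\varepsilon_1 a_1$. Since $M$ does not involve the Rademachers $\varepsilon_j$, conditioning on everything except $\varepsilon_1$ and using $\max(|a_1+R|,|a_1-R|)\ge|a_1|$ yields $\mathbb{P}(|S(1)|\ge|a_1|\mid\cdot)\ge 1/2$; intersecting with the $\{M([0,1])>\lambda\}$- and $\{|a_1|>K\}$-measurable events gives
\[
\mathbb{P}\big(|S(1)|>K\ \big|\ M([0,1])>\lambda\big)\ \ge\ \tfrac12\,\mathbb{P}\big(|a_1|>K\ \big|\ M([0,1])>\lambda\big).
\]
On $\{M([0,1])>\lambda\}$ one has $\Gamma_1^{-1/\alpha}>\lambda/\ell_\beta$ by Step 1; hence, using monotonicity of $L_1$, if also $V_1<1/2$ and $L_1(1/2)>\delta_0$ then $|a_1|\ge(2C_\alpha)^{1/\alpha}(\lambda/\ell_\beta)\delta_0>K$ once $\lambda$ is large relative to $K,\delta_0$, so $\mathbb{P}(|a_1|>K\mid M([0,1])>\lambda)\ge\mathbb{P}(V_1<1/2,\,L_1(1/2)>\delta_0\mid M([0,1])>\lambda)$. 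Finally $\{V_1<1/2,\,L_1(1/2)>\delta_0\}\in\sigma((V_j),(\sigma_j))$ is independent of $\Gamma_1$, so combining with the sandwich of Step 1,
\[
\liminf_{\lambda\to\infty}\mathbb{P}\big(V_1<1/2,\,L_1(1/2)>\delta_0\ \big|\ M([0,1])>\lambda\big)\ \ge\ \frac{(1/2)^{1-\beta}\,\mathbb{P}\big(L_1(1/2)>\delta_0\big)}{\ell_\beta^{\alpha}}\,,
\]
which is strictly positive for $\delta_0>0$ fixed small, because $L_1(1/2)$ has the law of an inverse $\beta$-stable subordinator at a positive time and is thus almost surely strictly positive.

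Combining the two steps: for each $K>0$ choose $\lambda=\lambda(K)$ large enough that the displays above force $\mathbb{P}(|S(1)|>K\mid M([0,1])>\lambda)\ge c$ for a constant $c>0$ independent of $K$. Under the independence hypothesis the left-hand side equals $\mathbb{P}(|S(1)|>K)$ (the conditioning event has positive probability by Step 1), so $\mathbb{P}(|S(1)|>K)\ge c$ for every $K$, contradicting $|S(1)|<\infty$ almost surely; hence $S$ and $M$ are dependent. I expect the only genuinely delicate point to be the bookkeeping at the end of Step 2 — namely that conditioning on the $M$-event does not destroy the positive probability of the ``good'' $S$-configuration $\{V_1<1/2,\ L_1(1/2)>\delta_0\}$; this is precisely what the two-sided bound of Step 1 is designed for, since it reduces $\{M([0,1])>\lambda\}$ to a pure $\Gamma_1$-event up to a bounded factor, after which the mutual independence of the driving sequences $(\varepsilon_j),(\Gamma_j),(\sigma_j),(V_j)$ finishes the estimate.
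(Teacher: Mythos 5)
Your argument is correct, and it takes a genuinely different route from the paper's. Both proofs rest on the same structural observation — $\Gamma_1$ simultaneously governs $M([0,1])$ (via the a.s.\ identity/sandwich $\Gamma_1^{-1/\alpha}\le M([0,1])\le \ell_\beta\,\Gamma_1^{-1/\alpha}$) and the dominant summand of $S(1)$ — but they exploit it differently. The paper computes the exact tail dependence coefficient $\lim_{x\to\infty}\mathbb{P}(|S(1)|>x\mid M([0,1])>x)$: this requires the precise tail asymptotics $\mathbb{P}(|S(1)|>x)\sim x^{-\alpha}\mathbb{E}|A_1|^\alpha$ (via a second-moment bound on the series remainder, Breiman's lemma, and a one-large-jump principle), and then a matching upper/lower bound on the joint tail; under independence the coefficient would be $0$, so its positivity gives dependence. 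You instead run a soft contradiction: the Rademacher symmetrization $\max(|a_1+R|,|{-a_1}+R|)\ge|a_1|$ lets you lower-bound $\mathbb{P}(|S(1)|>K\mid M([0,1])>\lambda)$ by (half of) the conditional probability that the leading term alone is large, and the two-sided $\Gamma_1$-sandwich reduces the conditioning to a pure $\Gamma_1$-event up to the bounded factor $\ell_\beta^{\alpha}$, so that independence of the driving sequences yields a lower bound $c>0$ uniform in $K$ once $\lambda=\lambda(K)$ is chosen large; under the independence hypothesis this forces $\mathbb{P}(|S(1)|>K)\ge c$ for all $K$, contradicting a.s.\ finiteness of $S(1)$. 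Your approach avoids Breiman's lemma and all tail asymptotics of $S(1)$ (in particular the remainder estimate \eqref{eq:trunc tail}), needing only monotonicity of $L_1$, positivity of $L_1(1/2)$, and the elementary behavior of $\mathbb{P}(\Gamma_1<u)$ near $0$; what it gives up is the quantitative content of the paper's proof, which identifies the limiting conditional probability $\mathbb{E}[|A_1|^\alpha\mathbf{1}_{\{|A_1|\le 1\}}]+\mathbb{P}(|A_1|>1)$ and thereby measures the strength of the asymptotic dependence rather than merely certifying it. One minor point of care, which you handle correctly: the simultaneous a.s.\ emptiness of $I_S$ over all $S$ with $|S|>\ell_\beta$ follows from the countably many sets of cardinality $\ell_\beta+1$, so the upper bound $M([0,1])\le\ell_\beta\Gamma_1^{-1/\alpha}$ indeed holds on a single full-measure event.
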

The proof of Proposition \ref{Pro:dep} can be found in Section \ref{sec:dep}.

\subsection{Joint convergence of subordinators with their local times and ranges}\label{subsec4}

Here, we provide a  result key to  the proof of Theorem \ref{thmmain}, which  might be of independent interest. It states that the weak convergence of strictly increasing subordinators implies its joint weak convergence with its local time and  range. We also include an independent shift for our purpose.

\begin{proposition}\label{thmsub}
	Let  $\sigma$, and $\sigma_n$, $n\in\mathbb{N}$, each be subordinator, with  $\sigma$   strictly increasing on $[0,\infty)$ almost surely. Let $V$, $V_n$, $n\in\mathbb{N}$, be nonnegative random variables. Suppose  $\sigma_n$   is independent of $V_n$,   $n\in \bb{N}$, and $\sigma$ is independent of $V$. Let $L$ and $L_n$ be the local times of $\sigma$, $\sigma_n$ respectively as defined by $(\ref{eqlocal})$, and let $R$ and $R_n$ be the closed ranges of $\sigma$ and $\sigma_n$ respectively as defined by $(\ref{eqrange})$. If $\sigma_n(1)\stackrel{d}{\to}\sigma(1)$, and $V_n\stackrel{d}{\to}V$ as $n\to\infty$, then for any  $t,y>0$, we have
	\begin{equation*}
		\begin{pmatrix}
			(V_n+\sigma_n(s))_{s\in [0,t]}\\
			\left(L_n\left((x-V_n)_+\right)\right)_{x\in [0,y]}\\
			V_n+R_n
		\end{pmatrix} 
		\Rightarrow\begin{pmatrix}
		(V+\sigma(s))_{s\in [0,t]}\\
		\left(L\left((x-V)_+\right)\right)_{x\in [ 0,y]}\\
		V+R
	\end{pmatrix}~~\text{as }n\to\infty
	\end{equation*} 
	weakly in the product space   $D[0,t]\times D[0,y]\times\mathfrak{F}([0,\infty))$, where $D[0,t]$ and $D[0,y]$ are  equipped with the uniform topology, and  $\mathfrak{F}([0,\infty))$  is equipped with the Fell topology. In particular,
	\begin{equation*}
		\begin{pmatrix}
			(\sigma_n(s))_{s\in [0,t]}\\
			\left(L_n(x)\right)_{x\in [0,y]}\\
			 R_n
		\end{pmatrix} 
		\Rightarrow\begin{pmatrix}
		( \sigma(s))_{s\in [0,t]}\\
		\left(L( x )\right)_{x\in [ 0,y]}\\
		 R
	\end{pmatrix}~~\text{as }n\to\infty
	\end{equation*} 
	in   $D[0,t]\times D[0,y]\times\mathfrak{F}([0,\infty))$.
\end{proposition}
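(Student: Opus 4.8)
The plan is to first upgrade the one–dimensional convergence $\sigma_n(1)\stackrel{d}{\to}\sigma(1)$ to functional convergence of the subordinators, and then to exhibit the whole triple as the image of $(\sigma_n,V_n)$ under a single map that is continuous at almost every realization of $(\sigma,V)$, so that the continuous mapping theorem applies. For the first step: since $x\mapsto e^{-\lambda x}$ is bounded and continuous, $\sigma_n(1)\stackrel{d}{\to}\sigma(1)$ gives $\Phi_n(\lambda)\to\Phi(\lambda)$ for all $\lambda\ge0$, where $\E e^{-\lambda\sigma_n(1)}=e^{-\Phi_n(\lambda)}$; hence $\E e^{-\lambda\sigma_n(t)}=e^{-t\Phi_n(\lambda)}\to e^{-t\Phi(\lambda)}$ for every $t$, and the stationary independent increments give convergence of all finite-dimensional distributions, $\sigma_n\Cfdd\sigma$. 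As $\sigma_n,\sigma$ are L\'evy processes and the limit has no fixed times of discontinuity, this upgrades to $\sigma_n\Rightarrow\sigma$ in $D[0,\infty)$ with the $J_1$ topology, by the classical functional limit theorem for L\'evy processes. Combining with $V_n\stackrel{d}{\to}V$ and the stated independences ($\sigma_n$ of $V_n$, $\sigma$ of $V$) yields the joint convergence $(\sigma_n,V_n)\Rightarrow(\sigma,V)$ in $D[0,\infty)\times[0,\infty)$.

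For the reduction, given a nondecreasing $g\in D[0,\infty)$ and $v\ge0$ write $L_g,R_g$ for the local time and closed range of $g$ (defined exactly as in \eqref{eqlocal}--\eqref{eqrange}, which make sense for any nondecreasing $g$), and set
$$\Psi(g,v):=\Big((v+g(s))_{s\in[0,t]},\ \big(L_g((x-v)_+)\big)_{x\in[0,y]},\ v+R_g\Big)\in D[0,t]\times D[0,y]\times\mathfrak{F}([0,\infty)).$$
The triples in the statement are $\Psi(\sigma_n,V_n)$ and $\Psi(\sigma,V)$, and $\Psi$ is Borel measurable. By the continuous mapping theorem it suffices to show that $\Psi$ is continuous at every pair $(g,v)$ such that $g$ is strictly increasing with $g(0)=0$, $\lim_{s\to\infty}g(s)=\infty$, and $g$ is continuous at the fixed time $t$; the law of $(\sigma,V)$ is concentrated on such pairs, since a strictly increasing subordinator a.s.\ has these properties (in particular it a.s.\ does not jump at the deterministic $t$). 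Equivalently, and this is the viewpoint we adopt for the verification, one passes through the Skorokhod representation theorem to a common probability space on which the convergence of the previous paragraph holds almost surely, and checks the a.s.\ convergence of each of the three coordinates.

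So fix $g_n\to g$ in $J_1$ with $g$ as above and $v_n\to v\ge0$. (i) The first coordinate is a vertical shift by the constant $v_n\to v$, hence $v_n+g_n(\cdot)\to v+g(\cdot)$ in $D[0,t]$ with the $J_1$ topology (continuity of $g$ at $t$ makes restriction to $[0,t]$ harmless). (ii) Since $g$ is strictly increasing, $L_g$ is continuous, and the inversion map is continuous from nondecreasing paths with a strictly increasing limit into $C[0,\infty)$ with local uniform convergence; thus $L_{g_n}\to L_g$ uniformly on $[0,y]$, and composing with the uniformly convergent, uniformly bounded maps $x\mapsto(x-v_n)_+$ gives $L_{g_n}((x-v_n)_+)\to L_g((x-v)_+)$ uniformly on $[0,y]$. (iii) For the range, we verify the two half-conditions of Fell convergence $R_{g_n}\to R_g$ on $[0,\infty)$: if an open $G$ meets $R_g$, strict monotonicity lets us pick a continuity point $s$ of $g$ with $g(s)\in G$, and then $g_n(s)\to g(s)$ puts a point of $R_{g_n}$ in $G$ eventually; if a compact $K$ is disjoint from $R_g$, with $\delta:=\mathrm{dist}(K,R_g)>0$, then $K$ is covered by finitely many gaps $(g(\tau-),g(\tau))$ of $R_g$, and $J_1$ convergence forces $g_n$ to have, near each such $\tau$, a jump from below $g(\tau-)+\delta$ to above $g(\tau)-\delta$, so $K$ is swallowed by gaps of $R_{g_n}$ for large $n$, i.e.\ $K$ misses $R_{g_n}$. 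Finally, translation $(F,w)\mapsto w+F$ is jointly continuous on $\mathfrak{F}([0,\infty))\times[0,\infty)$, so $v_n+R_{g_n}\to v+R_g$ in the Fell topology. The ``in particular'' statement is the specialization $V_n\equiv V\equiv 0$.

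The substantive point is step (iii): the closed range of a subordinator is typically a small (for instance, random Cantor) set, and one must extract both halves of Fell convergence purely from $J_1$ path convergence. Strict monotonicity of $\sigma$ is used decisively there, to realize an arbitrary point of $R_g$ as $g(s)$ with $s$ a continuity point of $g$, and also in step (ii): without it $L_g$ would have jumps, the inversion map would fail to be continuous into the uniform topology, and only $J_1$ convergence of the local times would survive.
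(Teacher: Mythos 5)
Your overall strategy (upgrade the one-dimensional convergence to a functional one, pass to an almost-sure coupling, then prove deterministic continuity statements for the inverse and the closed range) parallels the paper's, and your treatment of the second and third coordinates is sound. In particular, your direct verification of Fell convergence of the ranges via the hit-and-miss criterion --- realizing points of $R_g$ as $g(s)$ at continuity points $s$, and covering a missed compact set by finitely many gaps that are eventually swallowed by gaps of $g_n$ --- is a legitimate alternative to the paper's route through the distance functions $\rho(x,F_n)\to\rho(x,F)$ (Lemma \ref{lemsub_det}), and it even works under the weaker hypothesis of $J_1$ convergence rather than locally uniform convergence.

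There is, however, a genuine gap in the first coordinate. The proposition asserts convergence of $(V_n+\sigma_n(s))_{s\in[0,t]}$ in $D[0,t]$ equipped with the \emph{uniform} topology, whereas your argument only produces $\sigma_n\Rightarrow\sigma$ in the $J_1$ topology (via the functional limit theorem for L\'evy processes and Skorokhod representation), and you state explicitly in step (i) that the first coordinate converges ``in $D[0,t]$ with the $J_1$ topology.'' Since a strictly increasing subordinator $\sigma$ in general has jumps, $J_1$ convergence does not upgrade to uniform convergence: if $g$ jumps at $\tau$ and $g_n$ has the same jump at $\tau+1/n$, the $J_1$ distance tends to $0$ while the uniform distance stays bounded below by the jump size. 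The paper closes exactly this gap by invoking the coupling in \cite[Theorem 15.17]{kallenberg:2002:foundations}, which is special to L\'evy processes: from $\sigma_n(1)\stackrel{d}{\to}\sigma(1)$ one obtains versions $\widetilde\sigma_n\stackrel{d}{=}\sigma_n$ with $\sup_{0\le s\le t}|\widetilde\sigma_n(s)-\sigma(s)|\stackrel{P}{\to}0$ for every $t$, i.e.\ \emph{locally uniform} convergence in probability. With that coupling in hand, your arguments for coordinates (ii) and (iii) go through verbatim (indeed more easily), and coordinate (i) becomes immediate. Replacing your Skorokhod-representation step with this coupling would make the proof complete.
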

 
\begin{remark}
 Albeit a natural result, we are unable to find explicit  and general statements in literature about the  joint convergence of subordinators with their local times and ranges. See   \cite[Proposition A.4]{chen2021new} for a similar result regarding a joint convergence of the inverse of certain positive random walk and its range, which   could   be applied as  alternative arguments  in the proof of Theorem \ref{thmmain}. Note that we have   restricted the limit subordinator $\sigma$ to be strictly increasing, i.e., $\sigma$ is not a compound Poisson process. Since the latter can be  intrinsically treated as a discrete process   and rarely appears as a weak limit, we omit treating that case. 
\end{remark}
 
\section{Proofs}\label{sec5}

\subsection{Criterion for weak convergence}\label{eq:cri weak conv}

It is a classical recipe to establish weak convergence of stochastic processes   through proving convergence of finite-dimensional distributions (fdd) and tightness; see for instance, the treatment of weak convergence in the Skorokhod space in \cite[Chapter 3]{billingsley1999convergence}.  In this paper,   we are also dealing with random elements whose codomain is beyond the   Skorokhod space, i.e.,   the sup measure space $SM[0,1]$. Not suprisingly, the same  ``fdd$+$tightness'' recipe continues to work. We shall clarify this recipe with a general result below as well as its specification to our context.
Below we use $\mathcal{B}( {\mathsf{E}})$ to denote the Borel $\sigma$-field on a topological space ${\mathsf{E}}$.

Suppose $T$ is the (time) index set for the processes, and $\mathcal{U}$ is a separable metric space acting as the state space of the processes.
Let $\mathcal{M}$ be a  subspace of functions   $x$: $T\rightarrow \mathcal{U}$ which forms a Polish space, i.e., it admits a complete and separable metric.  
Define the  (multi-)projection mapping $\pi_{t_1,\ldots,t_d}:\mathcal{M}\rightarrow \mathcal{U}^d$, $\pi_{t_1,\ldots,t_d} x =(x(t_1),\ldots,x(t_d))$ for $x\in \mathcal{M}$ and $t_1,\ldots,t_d\in T$,  $d\in\mathbb{N}$. We assume throughout that each projection mapping $\pi_t:\cl{M}\mapsto \mathcal{U}$   is measurable, $t\in T$.

\begin{example}\label{eg:1}
	For our application, recall that both $D[0,1]$ with $J_1$ topology and $SM[0,1]$ with the sup vague topology are metrizable with a complete separable metric (\cite[Section 12]{billingsley1999convergence} and \cite[Remark 5.6]{vervaat1988random}). Now set $T=[0,1]\times \mathcal{G}([0,1]) $,  and let $\mathcal{M}=D[0,1]\times SM[0,1]$   equipped with the product metric,  and $\mathcal{U}=\mathbb{R}\times \overline{\mathbb{R}}$. The measurability of the projection mapping  follows from \cite[Section 12]{billingsley1999convergence}  and the definition of sup vague topology (see also the proof of \cite[Theorem 11.1]{vervaat1988random}).
\end{example}

For a stochastic process $\xi=(\xi(t))_{t\in T}$ which takes values in $\mathcal{M}$ with                                                                                                                  law $P_\xi$ on $\mathcal{M}$, we define  the following index subset of continuity points: 
\[
T_\xi =\left\{t\in T:\  P_\xi\left(\pi_t:  \cl{M}\mapsto \mathcal{U}  \text{ is continuous}\right)=1 \right\}.
\]
Following \cite{vervaat1988random}, we introduce the following concepts. Below $\eqd$ denotes equality in law.

\begin{definition}\label{Def:conv det}

 A subset of indices $T_0\subset T$ is said to be law determining if the following holds: for two processes $\xi_1$ and $\xi_2$ taking value in $\mathcal{M}$, whenever  $\pi_{t_1,\ldots,t_d}\xi_1 \eqd \pi_{t_1,\ldots,t_d} \xi_2$ on $\mathcal{U}^d $ for all $t_1,\ldots,t_d\in T_0$,  $d\in \mathbb{N}$, we necessarily have $\xi_1\eqd \xi_2$ on $\mathcal{M}$. In other words, the law of any random element $\xi$ taking values in $\mathcal{M}$ is completely determined by the fdds of $(\xi)_{t\in T_0}$.

Furthermore, a subset $T_0\subset T$ is said to be convergence determining, if for any two processes $\xi_1$ and $\xi_2$ taking values in $\mathcal{M}$, the subset $T_0\cap T_{\xi_1}\cap T_{\xi_2}$ is law determining.
\end{definition}

Below we suppose each of the stochastic processes $\xi=(\xi(t))_{t\in T},\xi_1=(\xi_1(t))_{t\in T},\xi_2=(\xi_2(t))_{t\in T},\ldots$,  takes values in $\mathcal{M}$.   
\begin{definition}\label{def:fdd}
 	We say $\xi_n$ converges in fdd to $\xi$  in a subset $T_0\subset T$  as $n\rightarrow\infty$, denoted by $\xi_n\stackrel{fdd}{\to}\xi$ in $T_0$, if  the weak convergence
$$\pi_{t_1,\ldots,t_d}\xi_n \Rightarrow\pi_{t_1,\ldots,t_d}\xi $$
	holds on $\mathcal{U}^d$ as $n\to\infty$ for all  $d\in\mathbb{N}$  and $t_1,\ldots,t_d\in T_0$. When $T_0$ is omitted, we understand $T_0$ as the full index set $T$.  
\end{definition}

Recall (the laws of) the family of random elements $\{\xi_n\}$ is said to be \emph{tight} on $\mathcal{M}$, if  \[\sup_{K}\inf_{n}\bb{P}(\xi_n\in K)=1\] with the sup taken over any compact subset $K$ of $\mathcal{M}$.
\begin{proposition}\label{thmfdd}
Suppose a  convergence determining subset  $T_0\subset T$ in the sense of Definition \ref{Def:conv det} exists.   
Then     the weak convergence $\xi_n\Rightarrow \xi$ on $\mathcal{M}$ as $n\rightarrow\infty$ holds  if and only if $\xi_n\stackrel{fdd}{\to}\xi$ as $n\rightarrow\infty$ in $T_{\xi}$ and   $\{\xi_n\}$ is tight.
 Moreover, if $T_0\subset T_\xi$, one may replace  $T_{\xi}$ by $T_0$ in the preceding statement.
\end{proposition}
\begin{proof}
For the ``if'' part, by tightness, for any subsequence of $\{\xi_n\}$, there exists a further subsequence which converges weakly to a limit law of a random element $\xi^*\in \mathcal{M}$ (e.g., \cite[Theorem 16.3]{kallenberg:2002:foundations}). It is enough to show the limit law $P_{\xi^*}$ of $\xi^*$ coincides with  the limit law $P_\xi$ of $\xi$.  Let the law of each $\xi_n$ be $P_{\xi_n}$, $n\in \mathbb{N}$.  For any   points $t_1,\ldots,t_d\in T_{\xi^*}$, $d\in \mathbb{N}$, we note that also the mapping $\pi_{t_1,\ldots,t_d}:\mathcal{M}\rightarrow \mathcal{U}^d$ is continuous almost surely with respect to $P_{\xi^*}$.
 So by  the continuous mapping theorem (e.g., \cite[Theorem 4.27]{kallenberg:2002:foundations}), we have as $n\rightarrow\infty$ along the same sub-sub sequence, the weak convergence $P_{\xi_n}\pi_{t_1,\ldots,t_d}^{-1}\Rightarrow  P_{\xi^*} \pi_{t_1,\ldots,t_d}^{-1}$. By the assumed fdd convergence and the uniqueness of weak limits, we have $P_{\xi^*} \pi_{t_1,\ldots,t_d}^{-1}=P_\xi \pi_{t_1,\ldots,t_d}^{-1}$ for $t_1,\ldots,t_d\in T_0\cap  T_{\xi}\cap T_{\xi^*} $. Then the desirable conclusion $P_\xi=P_{\xi^*}$ follows from the definition of  convergence determination of $T_0$ in Definition \ref{Def:conv det}.

The ``only if'' part follows from the fact that weak convergence implies tightness  (e.g., \cite[Theorem 16.3]{kallenberg:2002:foundations}),  and the continuous mapping theorem applied to $\pi_{t_1,\ldots,t_d}$ with $t_1,\ldots, t_d\in T_{\xi}$, $d\in \mathbb{N}$.

The second claim about the case where $T_0\subset T_\xi$ follows similarly as above.
\end{proof}

\begin{example}\label{eg:fdd}
In the setup of Example \ref{eg:1},  we claim that
	the subset    $T_0=[0,1]\times \mathcal{I} $ is convergence determining in the sense of Definition \ref{Def:conv det}, where $\mathcal{I}$ is the collection of  all nonempty open sub-intervals of $[0,1]$.  To show this, let $\xi_1=(Z_1,M_1) $ and $\xi_2=(Z_2,M_2)$ be two arbitrary random elements taking values in $\mathcal{M}=D[0,1]\times SM[0,1]$. In view of \cite[Section 12]{billingsley1999convergence} and the proof of \cite[Theorem 12.2]{vervaat1988random}, there exist  $\cl{J}_i \subset [0,1]$  and $\mathcal{I}_i\subset \mathcal{I}$,   $i=1,2$, such that each $[0,1]\setminus \cl{J}_i$ or  $\mathcal{I}\setminus \mathcal{I}_i$  is countable.  In addition, for these subsets selected,
	 one can ensure that a projection evaluated at a point of $\cl{J}_i$ in $D[0,1]$, or at a point of $\mathcal{I}_i$ in $SM[0,1]$, is continuous with respect to the marginal law of $Z_i$ or $M_i$ respectively, $i=1,2$. These imply that  $\cl{J}_i\times \mathcal{I}_i\subset T_{\xi_i}$, $i=1,2$. 
	 Now it suffices to show that  as a subset of $T_0\cap T_{\xi_1}\cap T_{\xi_2}$, the index set
	 \[T^*:=(\cl{J}_1\cap \cl{J}_2)\times (\mathcal{I}_1\cap \mathcal{I}_2),
	 \]   is law determining in the sense of Definition \ref{Def:conv det}. For this, it suffices by Dynkin's $\pi$-$\lambda$ theorem to  show that the $\pi$-system
	 	\begin{equation*} 
	\{ \pi_{t_1 ,\ldots,t_d}^{-1} U,\   d\in\mathbb{N}  ,\ t_1,\ldots,t_d\in T^*,\  U\in \mathcal{B}(\mathcal{U}^d)\}
	\end{equation*} 
	generates $\mathcal{B}(\mathcal{M})$.   To see this, notice that the same claim  holds    when $\mathcal{M}=D[0,1]$, $T^*=\cl{J}_1\cap \cl{J}_2$ and $\cl{U}=\mathbb{R}$  (\cite[Theorem 12.5]{billingsley1999convergence}),  or when $\mathcal{M}= SM[0,1]$ and $T^*=\mathcal{I}_1\cap \mathcal{I}_2$ and  $\cl{U}=\overline{\mathbb{R}}$ (\cite[Theorem 11.1]{vervaat1988random}). Then notice that  the Borel $\sigma$-field on the product space  $\mathcal{B}(D[0,1]\times SM[0,1])$ coincides with the product of Borel $\sigma$-fields $\mathcal{B}(D[0,1])\otimes \mathcal{B}(SM[0,1])$   since both $ D[0,1]$ and $ SM[0,1]$ are separable (\cite[Lemma 1.2]{kallenberg:2002:foundations}).

We also observe that a projection  mapping on $D[0,1]$  is continuous almost surely with respect to the limit law  $B_H$ in Theorem \ref{thmfv} or $S$ in Theorem \ref{thmmain}. This follows from \cite[Theorem 12.5]{billingsley1999convergence}   because both the fractional Brownian motion $B_H$ and the stable process $S$  (see \cite[Theorem 3.3]{owada2015functional}) admit   versions with  continuous paths  almost surely. Furthermore, a projection mapping on $SM[0,1]$ is also continuous with respect to the laws of the limit random sup measures in  Theorems \ref{thmfv}  and \ref{thmmain}; see  the proof of \cite[Proposition 5.2]{samorodnitsky2019extremal}.  Hence the  projection mapping on $D[0,1]\times SM[0,1]$ is continuous almost surely with respect to the law of the joint limit $\xi$  in either Theorem \ref{thmfv}  or \ref{thmmain}.  Hence $T_0=[0,1]\times \mathcal{I}\subset T_\xi$ for either of the theorems. 
	
\end{example}


\subsection{Finite variance case}\label{sec:pf finite var}

\begin{proof}[Proof of Theorem~{\upshape\ref{thmfv}}]
	Observe that the tightness holds on the product space if it holds on each marginal space. 
 The weak convergence, and hence the tightness  of $n^{-1}w_n^{1/2}S_n$ in $D[0,1]$,   follows from \cite[Theorem 9.4.7]{samorodnitsky2016stochastic}. 
 On the other hand, the normalized empirical sup measure
	 $b_n^{-1}M_n$ is automatically tight since the space $SM[0,1]$ is compact (\cite[Theorem 4.2]{vervaat1988random}). By Proposition \ref{thmfdd} and Example \ref{eg:fdd}, we only need to show the convergence in fdd in $[0,1]\times \cl{I}$ in the sense of Definition \ref{def:fdd}, recalling that $\cl{I}$ is the collection of  all nonempty open sub-intervals of $[0,1]$.  
	
	Fix $m>0$. For each $X_k$ in \eqref{eq1}, as   in \cite[Section 5]{samorodnitsky2019extremal}, it can be decomposed as
	\begin{equation*}
		X_k=X_{k,m}^{(1)}+X_{k,m}^{(2)},
	\end{equation*} 
	where
	\begin{equation*}
		X_{k,m}^{(j)} := \int_{{\mathsf{E}}} f\circ T^k({s})M^{(j)}_{m}(d{s}),~~j=1,2.
	\end{equation*} 
	Here $M^{(1)}_m$ and $M^{(2)}_m$ are two independent homogeneous symmetric infinitely divisible random measures. The	  Lévy measure for $M^{(1)}_m$ is the measure $\rho$ for $M$ restricted to the set $\{ | x|\leq m\}$ while the   Lévy measure for $M^{(2)}_m$ is the measure $\rho$ restricted to the set $\{| x| > m\}$. Define for each $j=1,2$,
	\begin{equation*}
		S^{(j)}_{n,m}(t):=\sum_{k=1}^{\lfloor nt\rfloor}X_{k,m}^{(j)},~~t\in[0,1]
	\end{equation*} 
and
\begin{equation*}
	M^{(j)}_{n,m}(B):=\max_{k\in n B\cap\mathbb{N}}X_{k,m}^{(j)},~~B\in \cl{G}([0,1])
\end{equation*} 
 with the understanding     $\max_\emptyset:=-\infty$.  By \cite[Theorem 9.4.7]{samorodnitsky2016stochastic} mentioned above, we have
	\begin{equation*}
		n^{-1}w_n^{1/2}S^{(1)}_{n,m}\stackrel{fdd}{\to} c_{\beta,m}B_H,~~\text{as }n\to\infty,
	\end{equation*} 
in $[0,1]$ as $n\rightarrow\infty$,	where
	\begin{equation*}
		c_{\beta,m}^2=\int_{- m }^m x^2\rho(dx)\frac{\Gamma(1+2\beta)}{\Gamma(2-\beta)\Gamma(2+\beta)}\mathbb{E}\left(\{S_\beta(1)\}^{-2\beta}\right);
	\end{equation*}
  by   a slight extension of \cite[Theorem 5.1]{samorodnitsky2019extremal} to incorporate the more  general regular variation assumption on $\rho$ (see also the proof of Theorem \ref{thmmain} in Section \ref{sec:pf inf var}), we have
	\begin{equation*}
		b_n^{-1}M^{(2)}_{n,m}\stackrel{fdd}{\to} \eta_{\alpha,\beta},~~\text{as }n\to\infty
	\end{equation*}
	in $\cl{I}$. Note that the limit above is the same regardless of the truncation parameter $m$ {as can be read from \cite[Theorem 5.1]{samorodnitsky2019extremal}}.  {An explanation   is that the limit  extremal behavior  only depends  on the tail behavior of the joint distributions of $\{X_{k,m}^{(2)}\}_{k=1,\ldots,n}$, which in turn is only determined by  the tail behavior of the L\'evy measure $\rho$.} Then by independence between $S^{(1)}_{n,m}$ and $M^{(2)}_{n,m}$,  we conclude
	\begin{equation*}
		\begin{pmatrix}
			n^{-1}w_n^{1/2}S^{(1)}_{n,m}\\
			b_n^{-1}M^{(2)}_{n,m}
		\end{pmatrix}\stackrel{fdd}{\to}\begin{pmatrix}
			c_{\beta,m} B_H\\
			\eta_{\alpha,\beta}
		\end{pmatrix}~~\text{as }n\to\infty,
	\end{equation*}
	in $[0,1]\times \cl{I}$, where $B_H$ and $\eta_{\alpha,\beta}$ are independent. Since $c_{\beta,m}\to c_\beta$ as $m\to\infty$, we have
	\begin{equation*}
		\begin{pmatrix}
			c_{\beta,m} B_H\\
			\eta_{\alpha,\beta}
		\end{pmatrix}\stackrel{fdd}{\to}\begin{pmatrix}
			c_{\beta} B_H\\
			\eta_{\alpha,\beta}
		\end{pmatrix},~~
	\end{equation*}
	in $[0,1]\times \cl{I}$ as $m\to\infty$.
	The desirable convergence in fdd    follows   from a standard triangular approximation argument  (e.g.,    \cite[Theorem 3.2]{billingsley1999convergence}), once we show that for any $t\in[0,1]$, $B\in \cl{G}([0,1])$ and any $ {\epsilon}>0$,  
	\begin{equation}
	\lim_{m\to\infty}\limsup_{n\to\infty}\mathbb{P}\left(n^{-1}w_n^{1/2}\left| S_{n}(t)-S^{(1)}_{n,m}(t)\right|>{\epsilon}\right)=0\label{th1eq1}
	\end{equation}
	and
	\begin{equation}
	\lim_{m\to\infty}\limsup_{n\to\infty}\mathbb{P}\left(b_n^{-1}\left| M_{n}(B)-M^{(2)}_{n,m}(B)\right|>{\epsilon}\right)=0.\label{th1eq2}
	\end{equation}
	Note that for fixed $t\in [0,1]$, by again  \cite[Theorem 9.4.7]{samorodnitsky2016stochastic},
	\begin{equation*}
		n^{-1}w_n^{1/2}\left(S_{n}(t)-S^{(1)}_{n,m}(t)\right)=n^{-1}w_n^{1/2}S^{(2)}_{n,m}(t)\stackrel{fdd}{\to} (c_\beta-c_{\beta,m})B_H(t),~~\text{as }n\to\infty,
	\end{equation*}
  from which	(\ref{th1eq1}) follows since $\lim_{m\to\infty}c_{\beta,m}=c_\beta$. To prove (\ref{th1eq2}), we may assume $n$ is large enough so that $\{k\in \bb{N}:k\in n B\}\neq \emptyset$. Note that for any real   $(\alpha_k)_{k=1,\ldots,n}$ and $(\beta_k)_{k=1,\ldots,n}$, $n\in \mathbb{N}$,  we have
	\begin{equation}
	\left|\max_{k=1,\ldots,n} (\alpha_k+\beta_k)-\max_{k=1,\ldots,n} \alpha_k\right|\leq \max_{k=1,\ldots,n}\left|\beta_k\right|.\label{th1eq3}
	\end{equation}
	So
	\begin{align*}
		\limsup_{n\to\infty}\mathbb{P}\left(b_n^{-1}\left| M_{n}({B})-M^{(2)}_{n,m}({B})\right|>{\epsilon}\right)&\leq \limsup_{n\to\infty}\mathbb{P}\left(\max_{k\in n B\cap\mathbb{N}}\left| X_{k,m}^{(1)}\right|>b_n{\epsilon}\right)\\
		&\leq \limsup_{n\to\infty}n\mathbb{P}\left(\left| X_{{1},m}^{(1)}\right|>b_n{\epsilon}\right)=0,
	\end{align*}
where the last limit follows from $b_n \in \mathrm{RV}_\infty ((1-\beta)/\alpha)$ and
	\begin{equation*}
		\mathbb{P}\left(\left| X_{k,m}^{(1)}\right|>x\right)=o\left(e^{-\delta x\log(x)}\right)  ~~\text{as }x\to\infty 
	\end{equation*}
	for some $\delta>0$, a tail property for an infinitely divisible distribution with  a bounded L\'evy measure (Theorem 26.1 in \cite{ken1999levy}).
\end{proof}

\subsection{Infinite variance case}\label{sec:pf inf var}

Recall the measure space $({\mathsf{E}},\cl{E},\mu)$ introduced in Section \ref{subsec1} used to define $\{X_n\}_{n\in\mathbb{N}}$ in $(\ref{eq1})$.
For each fixed $n\in \bb{N}$, let ${\left\{U^{(n)}_j\right\}}_{j\in\mathbb{N}}$ be iid ${\mathsf{E}}$-valued random variables with common law $\mu_n$  determined by
\begin{equation*}
\frac{d\mu_n}{d\mu} (\mathbf{x}) = \frac{\mathbf{1}_{\{\cup_{k=1}^n T^{-k}A\}}(\mathbf{x})}{\mu\left(\cup_{k=1}^n T^{-k}A\right)},~~ \mathbf{x}\in {\mathsf{E}}.
\end{equation*} 
Due to the shift invariance $\mu(T^{-1}\cdot)=\mu(\cdot)$ and $\mu(\cup_{k=1}^n T^{-k}A)=w_{n}$ in view of \eqref{def1}, we have 
\begin{equation}\label{eq:U entrance}
\bb{P}\left(U^{(n)}_j\in T^{-k} A\right)= \mu(T^{-k}A)w_{n}^{-1}=\mu(A)w_{n}^{-1}=w_{n}^{-1},\quad k=1,\ldots,n, \ j\in \bb{N}.
\end{equation}
As before  let ${\{}\varepsilon_j{\}}_{j\in\mathbb{N}}$ be a sequence of iid Rademacher random variables and let ${\{}\Gamma_j{\}}_{j\in\mathbb{N}}$ be a sequence of ordered standard Poisson arrivals on $[0,\infty)$. 
The three sequences mentioned are assumed independent. It follows from Theorem 3.4.3 in \cite{samorodnitsky2016stochastic} that the process $\{X_n\}_{n\in\mathbb{N}}$   admits a   series representation  for each fixed $n\in\mathbb{N}$:
\begin{equation*}
(X_k)_{k=1,\cdots,n}\stackrel{d}{=}\left(\sum_{j=1}^\infty\varepsilon_j\rho^{\leftarrow}(\Gamma_j/(2w_{n}))\mathbf{1}_{\{U^{(n)}_j\in T^{-k}A\}}\right)_{k=1,\cdots,n};\label{series}
\end{equation*}
see also \cite{bai2020functional}. In view of this series representation, we introduce  the processes  
\begin{equation}
\left(S^*_n(t)\right)_{t\in [0,1]}:=\left(\sum_{k=1}^{\lfloor nt\rfloor}\sum_{j=1}^\infty\varepsilon_j\rho^{\leftarrow}(\Gamma_j/(2w_{n}))\mathbf{1}_{\{U^{(n)}_j\in T^{-k}A\}}\right)_{t\in [0,1]} \label{eqsum}
\end{equation}
and 
\begin{equation}
(M^*_n(B))_{B\in \cl{G}([0,1])}:=\left(\max_{k\in nB\cap\mathbb{N}}\sum_{j=1}^\infty\varepsilon_j\rho^{\leftarrow}(\Gamma_j/(2w_{n}))\mathbf{1}_{\{U^{(n)}_j\in T^{-k}A\}}\right)_{B\in \cl{G}([0,1])}.\label{eqmax}
\end{equation}
Now with $\overset{fdd}{=}$ denoting equality in finite-dimensional distributions,    the partial sum process $(S_n(t))_{t\in [0,1]}$  in $(\ref{sum})$ and   the random sup measure $(M_n(B))_{B\in \cl{G}([0,1])}$ in $(\ref{max})$  satisfy for each  fixed $n\in \bb{N}$,
\begin{equation}\label{eq:equiv rep}
\begin{pmatrix}
\left(S_n(t)\right)_{t\in [0,1]}\\
(M_n(B))_{B\in \cl{G}([0,1])}
\end{pmatrix}\overset{fdd}{=}\begin{pmatrix}
\left(S^*_n(t)\right)_{t\in [0,1]}\\
(M^*_n(B))_{B\in \cl{G}([0,1])}
\end{pmatrix}.
\end{equation}

Now we proceed to introducing a Poissonization construction key to the proof.
For each $j,n\in\mathbb{N}$, define  the set of scaled entrance times with a random starting point $ U_j^{(n)}$ as
\begin{equation}\label{eq:R tilde j,n}
\wt{R}_{j,n} =\frac{1}{n}\left\{k=1,\cdots,n: \  U_j^{(n)}\in T^{-k}A\right\}.
\end{equation}
Due to the construction of $U_j^{(n)}$, $\wt{R}_{j,n}$ is nonempty, say with (random) cardinality  $Q_{j,n}\in \bb{N}$ , and hence we can write
\[
\wt{R}_{j,n}=V_{j,n}+\left\{\tau_{j,n}(0),\tau_{j,n}(1),\ldots, \tau_{j,n}(Q_{j,n} -1 ) \right\}\subset  n^{-1}\{1,\cdots,n\} ,
\]
where
\begin{equation}
V_{j,n}:=\frac{1}{n}\min\left\{k=1,\cdots,n:\ U^{(n)}_j\in T^{-k}A\right\},\label{vjn}
\end{equation}
is the scaled first entrance time and
$\tau_{j,n}(0):=0<\tau_{j,n}(1)<\ldots <\tau_{j,n}(Q_{j,n} -1 )$ 
are the scaled successive entrance times  relative to $V_{j,n}$.  In view of the Markov chain construction in Section \ref{subsec1} (cf.\  \cite{samorodnitsky2019extremal} and \cite{bai2022tail}), ${\{}n \tau_{j,n}(i){\}}_{i}$ forms an $\mathbb{N}$-valued renewal process (i.e.,   random walk with $\mathbb{N}$-valued iid\ steps) starting at $i=0$ and stopped at $i=Q_{j,n} -1 $, {and $\lim_{n\to\infty} Q_{j,n}=\infty$}. The inter-arrival times of ${\{}n \tau_{j,n}(i){\}}_{i}$, before stopping, have common probability mass function $P_{0} (\varphi_{A}=k)$ for $k= 1,2,\cdots$. For convenience, we shall extend the definition of $\tau_{j,n}(i)$ for $i\ge Q_{j,n}$, by adding with iid inter-arrivals,  while maintaining independence between different $j$'s for $\{\tau_{j,n}(i)\}_{i\in \mathbb{N}_0}$.

For each $j$ and $n$, let $\{N_{j,n}(t)\}_{t\geq 0}$ be a Poisson process with intensity
\begin{equation}\label{eq:gamma_n}
\gamma_n:=nw_n^{-1}/\Gamma(2-\beta)\in \text{RV}_\infty(\beta),
\end{equation} and independent of everything else. Define the non-decreasing processes $\sigma_{j,n}$ by 
\begin{equation}
\{\sigma_{j,n}(t):t\geq 0\}:=\{\tau_{j,n}(N_{j,n}(t)):t\geq 0\}.\label{sjn}
\end{equation}
Then each $\sigma_{j,n}$ is a non-decreasing compound Poisson process and hence a subordinator. Let $L_{j,n}$ and $R_{j,n}$ be the local time and range of $\sigma_{j,n}$ defined in (\ref{eqlocal}) and (\ref{eqrange}), respectively.  Note that   $\wt{R}_{j,n}=(V_{j,n}+R_{j,n})\cap[0,1]$. For each    subset  $S\subset\bb{N}$, define  the intersection of shifted random sets
\begin{equation}\label{eq:I_S,n}
I_{S,n}:=\begin{cases}
\bigcap_{j\in S}\wt{R}_{j,n}=\bigcap_{j\in S} (V_{j,n}+R_{j,n})\cap[0,1],\quad & \emptyset\neq S\subset \bb{N};\\
 n^{-1}\{1,\cdots,n\} , \quad  &   S=\emptyset. 
\end{cases}
\end{equation} 
Now set a level $\ell\in \bb{N}$, and introduce 
\begin{equation}
S^*_{n,\ell}(t):=\sum_{j=1}^\ell\varepsilon_j\rho^{\leftarrow}(\Gamma_j/(2w_{n}))L_{j,n}\left((t-V_{j,n})_+\right),~~t\in[0,1]\label{intsum}
\end{equation}
and
\begin{equation}
M^*_{n,\ell}(B):=\max_{S\subset\{1,\cdots,\ell\}}\mathbf{1}_{\{I_{S,n}\cap B\neq\emptyset\}}\sum_{j\in S}\mathbf{1}_{\{\varepsilon_j=1\}}\rho^{\leftarrow}(\Gamma_j/(2w_{n})),~~B\in\cl{G}([0,1]),\label{intmax}
\end{equation}
where a summation over an empty index set is understood as zero here and below.  We mention that \eqref{intsum} and \eqref{intmax} are respectively approximations of \eqref{eqsum}  and \eqref{eqmax}, as will be clarified in Lemmas \ref{lemsum} and \ref{lemmax} below. 
Define also the truncated version of $S(t)$ in (\ref{limsum}) by
\begin{equation}
S_{\ell}(t):=(2{ C_\alpha})^{1/\alpha}\sum_{j=1}^\ell\varepsilon_j\Gamma_j^{-1/\alpha}L_{j}\left((t-V_{j})_+\right),~~t\in[0,1].\label{trusum}
\end{equation}
and the truncated version of $M(B)$ in (\ref{maxidx}) as
\begin{equation}
	M_{\ell}(B):=\max_{S\subset\{1,\cdots,\ell\}  }\mathbf{1}_{\{I_{S}\cap B\neq\emptyset\}}\sum_{j\in S}\Gamma_j^{-1/\alpha},~~B\in \cl{G}([0,1]).\label{trumax}
\end{equation}

\begin{proposition}\label{thmapprox}
	Fix an integer $\ell>0$. Let $S^*_{n,\ell}$ and $M^*_{n,\ell}$ be defined by $(\ref{intsum})$ and $(\ref{intmax})$, respectively. Let $S_{\ell}$ and $M_{\ell}$ be defined by $(\ref{trusum})$ and $(\ref{trumax})$, respectively. Then
	\begin{equation}
	\frac{1}{\rho^{\leftarrow}(w_{n}^{-1})}\begin{pmatrix}
	S^*_{n,\ell}\\
	M^*_{n,\ell}
	\end{pmatrix}\stackrel{fdd}{\to}\begin{pmatrix}
	C_\alpha^{-1/\alpha}S_{\ell}\\
	M_{\ell}
	\end{pmatrix}~~\text{as }n\to\infty\label{thmappeq1}
	\end{equation}
	 in $T_0:=[0,1]\times \cl{I}$, where $\mathcal{I}$ is the collection of all nonempty open sub-intervals of $[0,1]$. 
\end{proposition}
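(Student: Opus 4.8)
The plan is to reduce the joint fdd convergence to a convergence statement about the underlying subordinators, and then invoke Proposition \ref{thmsub}. First I would fix finitely many time points $t_1,\ldots,t_p\in[0,1]$ and finitely many open intervals $B_1,\ldots,B_q\in\mathcal{I}$, and observe that by the joint independence across $j\in\{1,\ldots,\ell\}$ of the triples $\bigl(\varepsilon_j,\Gamma_j,(\sigma_{j,n}(\cdot),V_{j,n},L_{j,n},R_{j,n})\bigr)$ (and the fact that $(\varepsilon_j,\Gamma_j)$ do not depend on $n$), it suffices to establish the joint weak convergence, for each single $j$,
\[
\Bigl(V_{j,n}+\sigma_{j,n}(\cdot),\ \bigl(L_{j,n}((x-V_{j,n})_+)\bigr)_{x},\ V_{j,n}+R_{j,n}\Bigr)\ \Rightarrow\ \Bigl(V_j+\sigma_j(\cdot),\ \bigl(L_j((x-V_j)_+)\bigr)_{x},\ V_j+R_j\Bigr)
\]
in $D[0,t]\times D[0,y]\times\mathfrak{F}([0,\infty))$, jointly over $j=1,\ldots,\ell$. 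Granting this, the functionals appearing in \eqref{intsum} and \eqref{intmax} are continuous with respect to the relevant limit laws: $L_{j,n}((t-V_{j,n})_+)$ is a projection of the second coordinate, continuous a.s.\ because $L_j$ is a.s.\ continuous (its inverse $\sigma_j$ being strictly increasing); and the events $\{I_{S,n}\cap B_r\neq\emptyset\}$ are, up to a null set, continuous functions of the closed sets $\widetilde R_{j,n}=(V_{j,n}+R_{j,n})\cap[0,1]$ in the Fell topology, as is argued in \cite{samorodnitsky2019extremal}. Then the continuous mapping theorem, together with $\rho^{\leftarrow}(\Gamma_j/(2w_n))/\rho^{\leftarrow}(w_n^{-1})\to(2/\Gamma_j)^{1/\alpha}=2^{1/\alpha}\Gamma_j^{-1/\alpha}$ (by regular variation of $\rho^{\leftarrow}$ with index $-1/\alpha$ and the uniform convergence theorem applied pathwise in $\Gamma_j$), yields \eqref{thmappeq1}: the factor $(2C_\alpha)^{1/\alpha}$ in $S_\ell$ versus $C_\alpha^{-1/\alpha}$ on the left combine to the $2^{1/\alpha}$ above, and in $M_\ell$ the factor is just $2^{1/\alpha}$ absorbed into $\Gamma_j^{-1/\alpha}$ — here one must be slightly careful that the $\mathbf{1}_{\{\varepsilon_j=1\}}$ in \eqref{intmax} matches the representation \eqref{maxidx}/\eqref{trumax}, which holds because the maximum over $S$ only ever selects indices with $\varepsilon_j=1$ in the limit, all summands being nonnegative.

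The heart of the matter is therefore the convergence $\sigma_{j,n}(1)\stackrel{d}{\to}\sigma_j(1)$ (a standard $\beta$-stable subordinator) and $V_{j,n}\stackrel{d}{\to}V_j$, after which Proposition \ref{thmsub} does the rest. For $V_{j,n}$: by \eqref{eq:U entrance} and the renewal structure, $nV_{j,n}$ is distributed as the first entrance time conditioned to be $\le n$, so $\mathbb{P}(V_{j,n}>x)=\sum_{k>nx}P_0(\varphi_A\ge k)/w_n\sim (w_n-w_{nx})/w_n\to 1-x^{1-\beta}$ for $x\in[0,1]$, using $w_n\in\mathrm{RV}_\infty(1-\beta)$ from \eqref{ass1}; this is exactly the law of $V_j$. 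For $\sigma_{j,n}$: conditionally, $\sigma_{j,n}(t)=\tau_{j,n}(N_{j,n}(t))$ where $N_{j,n}$ is Poisson with intensity $\gamma_n=nw_n^{-1}/\Gamma(2-\beta)\in\mathrm{RV}_\infty(\beta)$ and $(n\tau_{j,n}(i))_i$ is a renewal process with step tail $P_0(\varphi_A>k)\in\mathrm{RV}_\infty(-\beta)$; composing a Poisson process run at rate $\gamma_n$ with a heavy-tailed renewal process and rescaling space by $n^{-1}$, the domain-of-attraction theory for $\beta$-stable subordinators (the step distribution being in the domain of attraction of a $\beta$-stable law, with $n\tau_{j,n}(\lfloor \gamma_n t\rfloor)/n$ converging to $S_\beta(t)$ once the constants $\gamma_n$ and the normalization $n$ are matched through $\Gamma(2-\beta)$) gives $\sigma_{j,n}(\cdot)\Rightarrow \sigma_j(\cdot)$; the role of the extra Poissonization $N_{j,n}$ is precisely to randomize the renewal index so that the limit is a genuine subordinator rather than a deterministically-time-changed one, and the constant $\Gamma(2-\beta)$ in $\gamma_n$ is chosen to make the Laplace exponent exactly $\lambda^\beta$. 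I would also need that the auxiliary extension of $\tau_{j,n}(i)$ past the stopping index $Q_{j,n}-1$ is asymptotically irrelevant on $[0,1]$: since $\widetilde R_{j,n}=(V_{j,n}+R_{j,n})\cap[0,1]$ by construction, the part of $\sigma_{j,n}$ beyond the first exit of $[0,1]$ does not affect any of the quantities in \eqref{intsum}–\eqref{intmax}, so this causes no trouble.

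The main obstacle I anticipate is the joint (over $j=1,\ldots,\ell$) weak convergence of the closed ranges in the Fell topology together with the claim that $S\mapsto\mathbf{1}_{\{I_{S,n}\cap B\neq\emptyset\}}$ passes to the limit — i.e.\ that intersections of the independent random closed sets $\widetilde R_{j,n}$ behave continuously. Marginal convergence $\widetilde R_{j,n}\Rightarrow\widetilde R_j$ follows from Proposition \ref{thmsub}, and independence upgrades it to joint convergence of the vector of sets; the delicate point is that $(F_1,\ldots,F_\ell)\mapsto \mathbf{1}_{\{F_1\cap\cdots\cap F_\ell\cap B\neq\emptyset\}}$ is not Fell-continuous in general, so one must argue that the limit law $(\widetilde R_1,\ldots,\widetilde R_\ell)$ charges only configurations at which it is continuous — for open $B$ this amounts to ruling out the boundary case where the intersection touches $B$ only at a single point or at $\partial B$, which holds a.s.\ because the limiting regenerative sets (shifted by the diffuse variables $V_j$) intersect in sets that are either empty or perfect and, when nonempty in an open set, have accumulation points inside it. This is essentially the argument carried out in the proof of \cite[Proposition 5.2]{samorodnitsky2019extremal}, and I would adapt it verbatim, the only new feature being the extra independent shift $V_j$, which only helps (it makes the relevant coincidences even less likely). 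Everything else — the scalar convergence of the coefficients, the handling of the signs $\varepsilon_j$, and assembling the finitely many coordinates — is routine.
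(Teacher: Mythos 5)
Your overall architecture coincides with the paper's: establish $V_{j,n}\stackrel{d}{\to}V_j$ and $\sigma_{j,n}(1)\stackrel{d}{\to}\sigma_j(1)$, feed these into Proposition \ref{thmsub} to obtain, jointly over $j=1,\ldots,\ell$ by independence, the convergence of the local times $L_{j,n}((\cdot-V_{j,n})_+)$ and the shifted ranges $\wt{R}_{j,n}$, and then conclude via the coefficient convergence $\rho^{\leftarrow}(\Gamma_j/(2w_n))/\rho^{\leftarrow}(w_n^{-1})\to(\Gamma_j/2)^{-1/\alpha}$ and the Poisson thinning identity \eqref{thmapproxeq1}. Your direct computation of the limit law of $V_{j,n}$ and the stable domain-of-attraction argument for $\sigma_{j,n}(1)$ (including the role of the Poissonization and of the constant $\Gamma(2-\beta)$ in $\gamma_n$) are also what the paper does, the former being delegated there to \cite[Theorem 5.4]{samorodnitsky2019extremal}.

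The one step that would not survive as written is the passage from convergence of the individual sets $\wt{R}_{j,n}$ to convergence of the indicators $\mathbf{1}_{\{I_{S,n}\cap B\neq\emptyset\}}$. You propose to treat $(F_1,\ldots,F_\ell)\mapsto\mathbf{1}_{\{F_1\cap\cdots\cap F_\ell\cap B\neq\emptyset\}}$ as almost surely continuous at the limit configuration, arguing that one only needs to rule out the intersection meeting $B$ degenerately. That is not sufficient: intersection is badly discontinuous in the Fell topology --- already $F_n=\{1/n\}\to\{0\}$ and $G_n=\{-1/n\}\to\{0\}$ give $F_n\cap G_n=\emptyset$ while the limit intersection is nonempty --- so no regularity of the \emph{limit} sets alone prevents the prelimit sets from simply missing each other, and the ``perfectness'' of intersections of regenerative sets does not repair this. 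What is actually needed, and what the paper uses, is Lemma \ref{lemconv} (an extension of \cite[Theorem 2.1(b)]{samorodnitsky2019extremal}): joint Fell convergence of the individual sets, \emph{together with the separately established marginal convergence} $I_{S,n}\Rightarrow I_S$ for each $S\subset\{1,\ldots,\ell\}$ (this is \cite[Theorem 5.4]{samorodnitsky2019extremal}, a genuinely nontrivial statement about intersections of independent shifted renewal ranges converging to intersections of shifted stable regenerative sets, not \cite[Proposition 5.2]{samorodnitsky2019extremal}, which concerns continuity of projections on $SM$), upgrades to the joint convergence \eqref{thmapproxeq6} of the local times, the sets, and all their intersections simultaneously. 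So the marginal intersection convergence must be imported as an explicit additional hypothesis rather than derived from continuity; once you do that, the rest of your argument goes through and matches the paper's proof.
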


\begin{proof}[Proof of Proposition~{\upshape\ref{thmapprox}}]
	Since $(P_{0} (\varphi_{A}>n))_n\in \text{RV}_\infty(-\beta)$, write $P_{0} (\varphi_{A}>n)=: n^{-\beta} f(n) $ with $ f(n) $ slowly varying.  Recall each $\tau_{j,n}$ is an increasing random walk with infinite-mean regularly varying step size, and  $\{\sigma_j\}$, $\{L_j\}$ and $\{V_j\}$ are  as described before \eqref{limsum}. A classical result for convergence to stable law (see e.g. Section 1.a. in \cite{chow1978sum}) entails that
	\begin{equation*}
	\frac{1}{\gamma_n^{1/\beta} {\widetilde{f^{-1/\beta}}} (\gamma_n^{1/\beta})}n\tau_{j,n}(\lfloor\gamma_n\rfloor)\Rightarrow {\left(\Gamma(1-\beta)\right)}^{1/\beta} \sigma_j(1)~~\text{as }n\to\infty,
	\end{equation*}
	where  $\widetilde{f^{-1/\beta}}$  is the slowly varying function conjugate to $f^{-1/\beta}$  satisfying 
\begin{equation}\label{eq:slowly var conj}
 \lim_{n\to\infty}f^{-1/\beta}(n)\widetilde{f^{-1/\beta}}(nf^{-1/\beta}(n))=1 
\end{equation}	
	(see e.g. Theorem 1.5.13 in \cite{bingham1989regular}) and $\gamma_n$ is as in \eqref{eq:gamma_n}. 
	 Note that by the relation in \eqref{ass1}, we have as $n\rightarrow\infty$,
	\begin{equation*}
	\gamma_n^{1/\beta}\sim nf^{-1/\beta}(n)\left(\Gamma(1-\beta)\right)^{-1/\beta}.
	\end{equation*}
 This together with \eqref{eq:slowly var conj} 
	 implies $\lim_{n\to\infty}\gamma_n^{-1/\beta}\{\widetilde{f^{-1/\beta}}(\gamma_n^{1/\beta})\}^{-1}n={\left(\Gamma(1-\beta)\right)}^{1/\beta}$ and hence
	\begin{equation*}
	\tau_{j,n}(\lfloor\gamma_n\rfloor)\Rightarrow\sigma_j(1)~~\text{as }n\to\infty.
	\end{equation*}
	Since $N_{j,n}(1)/\gamma_n\stackrel{\mathbb{P}}{\to}1$, $N_{j,n}(1)$ is independent of $\tau_{j,n}$, and 
	$(\gamma_n)\in  \text{RV}_\infty(\beta)$, by a standard argument replacing the deterministic  $\floor{\gamma_n}$ with  the random   $N_{j,n}(1)$, we have	
	\begin{equation*}
	\sigma_{j,n}(1)\Rightarrow\sigma_j(1)~~\text{as }n\to\infty,\label{thmapproxeq2}
	\end{equation*}
	where $\sigma_{j,n}$ is defined by \eqref{sjn}. In addition, 
	as a direct result from the proof of Theorem 5.4  in \cite{samorodnitsky2019extremal}, we have
	\begin{equation*}
	V_{j,n}\stackrel{d}{\to}V_j~~\text{as }n\to\infty,\label{thmapproxeq3}
	\end{equation*}
	where $V_{j,n}$ is as  in \eqref{vjn}.  Now applying  Proposition \ref{thmsub} with independence between $j$'s, we have
	\begin{equation}
	\begin{pmatrix}
	\left(L_{j,n}\left((x-V_{j,n})_+\right)\right)_{x\in[0,1]}\\
	(V_{j,n}+R_{j,n})\cap [0,1]
	\end{pmatrix}_{j=1,\cdots,\ell}\Rightarrow\begin{pmatrix}
	\left(L_j\left((x-V_j)_+\right)\right)_{x\in[0,1]}\\
	(V_j+R_j)\cap[0,1]
	\end{pmatrix}_{j=1,\cdots,\ell}\label{thmapproxeq4}
	\end{equation} 
	weakly in  $ D[0,1]^{\ell}\times\mathfrak{F}([0,1])^{\ell}$  as $n\to\infty$.  By Theorem 5.4  in \cite{samorodnitsky2019extremal}, we have the marginal convergence for each $S\subset \{1,\ldots,\ell\}$:
	\begin{equation}
	I_{S,n}\Rightarrow I_S\label{thmapproxeq5}
	\end{equation}
	in  $\mathfrak{F}([0,1])$  as $n\to\infty$, where $I_{S,n}$ is as in \eqref{eq:I_S,n} and $I_S$ is as in \eqref{eq:I_S}. By  
	Lemma \ref{lemconv} below (an extension of \cite[Theorem 2.1]{samorodnitsky2019extremal}), the relations \eqref{thmapproxeq4} and \eqref{thmapproxeq5} imply the joint convergence
	\begin{equation}
	\begin{pmatrix}
	\left(L_{j,n}\left((x-V_{j,n})_+\right)\right)_{x\in[0,1]}\\
	I_{S,n}
	\end{pmatrix}_{\substack{j=1,\cdots,\ell\\S\subset\{1,\cdots,\ell\}}}\Rightarrow\begin{pmatrix}
	\left(L_j((x-V_j)_+)\right)_{x\in[0,1]}\\
	I_S
	\end{pmatrix}_{\substack{j=1,\cdots,\ell\\S\subset\{1,\cdots,\ell\}}}\label{thmapproxeq6}
	\end{equation}
	weakly in $D[0,1]^{\ell}\times\mathfrak{F}([0,1])^{2^\ell}$ as $n\to\infty$. Now \eqref{thmappeq1} is a consequence of \eqref{thmapproxeq6} and the facts that 
	\begin{equation*}
	\frac{\rho^{\leftarrow}(\Gamma_j/(2w_{n}))}{\rho^{\leftarrow}\left(w_{n}^{-1}\right)}\to (\Gamma_j/2)^{-1/\alpha},~~j=1,\cdots,\ell,
	\end{equation*}
due to the regular variation mentioned below \eqref{inv} 	and
	\begin{equation}
	(M_{\ell}(B))_{B\in \cl{G}([0,1])}\eqfdd\left(\max_{S\subset\{1,\cdots,\ell\}}\mathbf{1}_{\{I_{S}\cap B\neq\emptyset\}}\sum_{j\in S}\mathbf{1}_{\{\varepsilon_j=1\}}(\Gamma_j/2)^{-1/\alpha}\right)_{B\in \cl{G}([0,1])}.\label{thmapproxeq1}
	\end{equation}
	Here  \eqref{thmapproxeq1} holds because the thinned Poisson   process ${\{}\Gamma_j /2{\}}_{j\in\mathbb{N},\varepsilon_j=1}$ equals in law to ${\{}\Gamma_j{\}}_{j\in\mathbb{N}}$. 	
\end{proof}

\begin{lemma}\label{lemconv} 
	Let $\{A_k\}_{k=1,\cdots,m}$ and $\{A_k(n)\}_{n\in\mathbb{N},k=1,\cdots,m}$ be random closed
	sets in $\mathfrak{F} = \mathfrak{F}(\mathbb{R}^d )$, $m\in \bb{N}$.  Let ${x}(n)$ and ${x}$ be random  elements  in ${\mathsf{E}}$, where ${\mathsf{E}}$ is a separable metric space. Suppose the following joint weak convergence holds:
	\begin{equation}
	\begin{pmatrix}
	{x}(n)\\A_k(n)
	\end{pmatrix}_{k=1,\cdots,m}\Rightarrow\begin{pmatrix}
	{x}\\A_k\end{pmatrix}_{k=1,\cdots,m}\label{wkconv}
	\end{equation}
	in ${\mathsf{E}}\times\mathfrak{F}^m$ as $n\to\infty$. For any nonempty $I\subset \{1,\cdots,m\}$, set the intersections of random closed sets   $A_I (n) = \bigcap_{k\in I}	A_k(n)$ and $A_I = \bigcap_{k\in I}A_k$.   We also set  $A_\emptyset(n),A_\emptyset\in\mathfrak{F}$  to be nonrandom such that $A_k(n)\subset A_\emptyset(n)$ and $A_k\subset A_\emptyset$ for all $k$. Assume in addition the marginal convergence of intersections:
	\begin{equation*}
	A_I (n)\Rightarrow A_I ~as~ n \to\infty, \text{ for each}~I \subset \{1,\cdots,m\}.
	\end{equation*}
	Then
	\begin{equation}
	\begin{pmatrix}
	{x}(n)\\
	(A_I (n))_{	I \subset \{1,\cdots,m\}}
	\end{pmatrix} \Rightarrow\begin{pmatrix}
	{x}\\
	(A_I )_{I \subset \{1,\cdots,m\}}
	\end{pmatrix},\label{eq:joint goal}
	\end{equation}
	in  ${\mathsf{E}}\times \mathfrak{F}^{2^m}$.
\end{lemma}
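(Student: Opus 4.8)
The plan is to prove the joint convergence \eqref{eq:joint goal} by a relative‑compactness argument followed by an identification of the subsequential limits, mirroring the proof of \cite[Theorem 2.1]{samorodnitsky2019extremal} while carrying the extra $E$‑valued coordinate $B(n)$ along. First I would record that $\mathfrak{F}=\mathfrak{F}(\mathbb{R}^d)$ with the Fell topology is a compact metrizable space (see \cite{molchanov2005theory}), so $\mathfrak{F}^{2^m}$ is compact metrizable and $E\times\mathfrak{F}^{2^m}$ is a separable metric space. From $B(n)\Rightarrow B$ the family $\{B(n)\}$ is tight in $E$, while $\{(A_I(n))_I\}$ is automatically tight by compactness of $\mathfrak{F}^{2^m}$; hence the laws of $(B(n),(A_I(n))_I)$ are relatively compact. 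I would then fix an arbitrary weakly convergent subsequence, with limit law $\nu$, and by the Skorokhod representation theorem realize it on a common probability space so that $(B(n_k),(A_I(n_k))_I)\to(\widetilde B,(\widetilde A_I)_I)$ almost surely, the $\mathfrak{F}$‑coordinates converging in the Fell sense, equivalently in the Painlev\'e--Kuratowski sense, where $(\widetilde B,(\widetilde A_I)_I)$ has law $\nu$. It then suffices to show that $\nu$ is the law of $(B,(A_I)_I)$.

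The identification I would carry out in three steps. \emph{(i) Marginals.} Projecting onto the $B$‑ and singleton‑$\{j\}$‑coordinates — a continuous map — and using \eqref{wkconv} along the subsequence, one gets $(\widetilde B,\widetilde A_1,\dots,\widetilde A_m)\overset{d}{=}(B,A_1,\dots,A_m)$; from the assumed marginal convergence $A_I(n)\Rightarrow A_I$ one gets $\widetilde A_I\overset{d}{=}A_I$ for every $I$. \emph{(ii) One‑sided inclusion under the coupling.} Fix a nonempty $I$ and $x\in\widetilde A_I$; by Painlev\'e--Kuratowski convergence there are $x_k\in A_I(n_k)$ with $x_k\to x$, and since $A_I(n_k)=\bigcap_{j\in I}A_j(n_k)\subset A_j(n_k)$, the point $x$ is a limit of points of $A_j(n_k)$, hence $x\in\widetilde A_j$ by $A_j(n_k)\to\widetilde A_j$. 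Thus $\widetilde A_I\subset\bigcap_{j\in I}\widetilde A_j$ almost surely, with the exceptional event chosen uniformly over the finitely many $I$. \emph{(iii) Upgrading to equality.} Put $X=\widetilde A_I$, $Y=\bigcap_{j\in I}\widetilde A_j$. By (i), $Y\overset{d}{=}\bigcap_{j\in I}A_j=A_I\overset{d}{=}X$, and by (ii), $X\subset Y$ a.s. For every compact $K\subset\mathbb{R}^d$, $X\subset Y$ gives $\mathbb{P}(Y\cap K\neq\emptyset,\ X\cap K=\emptyset)=\mathbb{P}(Y\cap K\neq\emptyset)-\mathbb{P}(X\cap K\neq\emptyset)=0$ by equality in law; ranging $K$ over the countable family of closed balls with rational centres and radii and intersecting the corresponding null events, one concludes that a.s.\ $Y$ meets such a ball only when $X$ does, which, both sets being closed, forces $Y\subset X$ a.s., hence $X=Y$ a.s. Therefore $\widetilde A_I=\bigcap_{j\in I}\widetilde A_j$ a.s.\ for every $I$, so $(\widetilde B,(\widetilde A_I)_I)=(\widetilde B,(\bigcap_{j\in I}\widetilde A_j)_I)$ a.s.; since intersection is a fixed measurable map and $(\widetilde B,(\widetilde A_j)_j)\overset{d}{=}(B,(A_j)_j)$, the law of the latter is that of $(B,(A_I)_I)$. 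Hence $\nu$ is the law of $(B,(A_I)_I)$, and since every subsequential limit is this law, \eqref{eq:joint goal} follows; the case $I=\emptyset$ is trivial because $A_\emptyset(n)$ is nonrandom with $A_\emptyset(n)\to A_\emptyset$.

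The step I expect to be the main obstacle is (iii): converting the almost‑sure inclusion $\widetilde A_I\subset\bigcap_{j\in I}\widetilde A_j$ together with equality in distribution into almost‑sure equality. This is where the hypothesis on marginal convergence of intersections is used in an essential way, and where one must be careful to test only against a countable separating family of compact sets so that the relevant null events can be combined; the other two steps, and the relative‑compactness/Skorokhod setup, are routine.
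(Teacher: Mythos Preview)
Your proof is correct and follows essentially the same route as the paper, which simply invokes Skorokhod's representation on the assumption \eqref{wkconv} and then defers to the proof of \cite[Theorem~2.1(b)]{samorodnitsky2019extremal}; your steps (ii)--(iii) spell out precisely that cited argument while carrying the extra $E$-valued coordinate along. The only organizational difference is that the paper applies Skorokhod to the \emph{small} vector $(B(n),A_1(n),\dots,A_m(n))$ first and then upgrades the intersections to convergence in probability on the same coupling, whereas you pass to a tight subsequence of the \emph{large} vector and apply Skorokhod there---but the substantive content (upper-semicontinuity of intersection giving the inclusion, then ``inclusion $+$ equal law $\Rightarrow$ a.s.\ equality'' via a countable separating family of compacts) is identical.
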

\begin{proof}[Proof of Lemma~{\upshape\ref{lemconv}}]
The only essential difference between \cite[Theorem 2.1(b)]{samorodnitsky2019extremal}   and the current lemma is the inclusion of the components  ${x}(n)$ and ${x}$.
 Note that the product space   ${\mathsf{E}}\times\mathfrak{F}^m$  is a separable metric space too since both ${\mathsf{E}}$ and  $\mathfrak{F}$  are. Then  the Skorokhod’s representation theorem applies to   replace (\ref{wkconv})  with an almost sure convergence.  It remains to follow the proof of  \cite[Theorem 2.1(b)]{samorodnitsky2019extremal} to establish a convergence-in-probability version of \eqref{eq:joint goal}.  
\end{proof}

With Proposition \ref{thmapprox}, we need to establish certain triangular approximation results in order to reach the conclusion  of Theorem \ref{thmmain}. First is an approximation result for the partial sum process.  Second is for the partial maximum process. 
\begin{lemma}\label{lemsum}
	For any $t\in[0,1]$ and for any $\epsilon>0$,
	\begin{equation}
	\lim_{\ell\to\infty}\limsup_{n\to\infty}\mathbb{P}\left(\left| c_n^{-1}S^*_{n}(t)-\{\rho^{\leftarrow}(w_{n}^{-1})\}^{-1}S^*_{n,\ell}(t)\right|>\epsilon\right)=0,\label{propeq1}
	\end{equation}
	where $S^*_{n,\ell}$ is as in \eqref{intsum}, $S_n^*$ is as in \eqref{eqsum}, and $c_n=\rho^{\leftarrow}(w_n^{-1})nw_n^{-1}/\Gamma(2-\beta)$.
\end{lemma}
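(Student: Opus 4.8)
The plan is to recast \eqref{propeq1} in terms of the entrance count $K_{j,n}(t):=\sum_{k=1}^{\lfloor nt\rfloor}\mathbf{1}_{\{U_j^{(n)}\in T^{-k}A\}}$, so that $S_n^*(t)=\sum_{j\ge1}\varepsilon_j\rho^{\leftarrow}(\Gamma_j/(2w_n))\,K_{j,n}(t)$, and, since $c_n=\rho^{\leftarrow}(w_n^{-1})\gamma_n$ with $\gamma_n$ as in \eqref{eq:gamma_n}, the quantity inside the probability in \eqref{propeq1} equals $\rho^{\leftarrow}(w_n^{-1})^{-1}(D^{(1)}_{n,\ell}(t)+D^{(2)}_{n,\ell}(t))$, where $D^{(1)}_{n,\ell}(t):=\sum_{j=1}^{\ell}\varepsilon_j\rho^{\leftarrow}(\Gamma_j/(2w_n))(\gamma_n^{-1}K_{j,n}(t)-L_{j,n}((t-V_{j,n})_+))$ and $D^{(2)}_{n,\ell}(t):=\gamma_n^{-1}\sum_{j>\ell}\varepsilon_j\rho^{\leftarrow}(\Gamma_j/(2w_n))\,K_{j,n}(t)$. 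By a union bound it suffices to prove: (i) for each fixed $\ell$, $\rho^{\leftarrow}(w_n^{-1})^{-1}D^{(1)}_{n,\ell}(t)\to0$ in probability as $n\to\infty$; and (ii) $\lim_{\ell\to\infty}\limsup_{n\to\infty}\mathbb{P}(\rho^{\leftarrow}(w_n^{-1})^{-1}|D^{(2)}_{n,\ell}(t)|>\epsilon)=0$.

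For (i), since $\rho^{\leftarrow}(\Gamma_j/(2w_n))/\rho^{\leftarrow}(w_n^{-1})\to(\Gamma_j/2)^{-1/\alpha}$ a.s.\ by regular variation of $\rho^{\leftarrow}$ at $0$, it is enough that $\gamma_n^{-1}K_{j,n}(t)-L_{j,n}((t-V_{j,n})_+)\to0$ in probability for fixed $j$. Let $\nu_{j,n}(x)$ be the number of points of the renewal sequence $\{\tau_{j,n}(i)\}_{i\ge0}$ in $[0,x]$. Then $K_{j,n}(t)$ differs from $\nu_{j,n}((\lfloor nt\rfloor/n-V_{j,n})_+)$ by at most $1$, while the compound-Poisson structure $\sigma_{j,n}=\tau_{j,n}\circ N_{j,n}$ with $N_{j,n}$ of rate $\gamma_n$ gives $\gamma_nL_{j,n}(x)=\sum_{i=1}^{\nu_{j,n}(x)}E_i$ for i.i.d.\ standard exponentials $E_i$ independent of $(\tau_{j,n},V_{j,n})$. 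Hence
\[
\gamma_n\big(L_{j,n}((t-V_{j,n})_+)-\gamma_n^{-1}K_{j,n}(t)\big)=\sum_{i=1}^{\nu_{j,n}((t-V_{j,n})_+)}(E_i-1)+\big(\nu_{j,n}((t-V_{j,n})_+)-\nu_{j,n}((\lfloor nt\rfloor/n-V_{j,n})_+)\big)+O(1).
\]
Because $\E[\nu_{j,n}((t-V_{j,n})_+)]\le\E[K_{j,n}(1)]+O(1)\le C\gamma_n$, the centered sum divided by $\gamma_n$ is $o(1)$ in $L^2$; and since $\gamma_n^{-1}\nu_{j,n}(\cdot)\Rightarrow L_j(\cdot)$ uniformly on compacts (the renewal/subordinator convergence already established in the proof of Proposition~\ref{thmapprox}, $L_j$ being continuous), $\lfloor nt\rfloor/n\uparrow t$, and $V_{j,n}\Rightarrow V_j$ with $V_j$ having a density on $[0,1]$ (so $V_j\ne t$ a.s.), the middle difference divided by $\gamma_n$ also tends to $0$. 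This proves (i).

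Statement (ii) is the main obstacle. Assume $\ell>2/\alpha$, which ensures $\E[\rho^{\leftarrow}(\Gamma_j/(2w_n))^2]<\infty$ for $j>\ell$; then, because the signs $(\varepsilon_j)$ kill all cross terms and $K_{j,n}(t)$ depends only on $U_j^{(n)}$ (hence is independent of $(\Gamma_j)$), one has $\E[(\rho^{\leftarrow}(w_n^{-1})^{-1}D^{(2)}_{n,\ell}(t))^2]=\gamma_n^{-2}\sum_{j>\ell}\E[g_n(\Gamma_j/2)^2]\,\E[K_{j,n}(t)^2]$, where $g_n(\lambda):=\rho^{\leftarrow}(\lambda w_n^{-1})/\rho^{\leftarrow}(w_n^{-1})$. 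We bound $\E[K_{j,n}(t)^2]\le\E[K_{j,n}(1)^2]=w_n^{-1}(n+2\sum_{m=1}^{n-1}(n-m)u_m)$ with $u_m:=P_0(Y_m=0)$, and invoke the Green's-function estimate $\sum_{m=0}^n u_m=O(n/w_n)$ (valid under \eqref{ass1}; cf.\ \cite{owada2015functional}) to obtain $\E[K_{j,n}(t)^2]\le C\gamma_n^2$ uniformly in $j,n$. Thus the second moment is at most $C\sum_{j>\ell}\E[g_n(\Gamma_j/2)^2]$, and by Chebyshev (ii) reduces to $\lim_{\ell\to\infty}\limsup_{n\to\infty}\sum_{j>\ell}\E[g_n(\Gamma_j/2)^2]=0$. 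To prove this I split each expectation over $\{\Gamma_j\le2\delta w_n\}$, $\{2\delta w_n<\Gamma_j\le2w_n\}$ and $\{\Gamma_j>2w_n\}$ for a small fixed $\delta>0$. On the first set, Potter's inequality for $\rho^{\leftarrow}\in\mathrm{RV}_0(-1/\alpha)$ gives $g_n(\lambda)\le C\max(\lambda^{-1/\alpha-\kappa},\lambda^{-1/\alpha+\kappa})$, and since $\E[\Gamma_j^{-s}]=\Gamma(j-s)/\Gamma(j)\asymp j^{-s}$ this contributes at most $C\sum_{j>\ell}j^{-2/\alpha+2\kappa}$, a convergent tail (for $\kappa<1/\alpha-1/2$) vanishing as $\ell\to\infty$. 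On the middle set the numerator of $g_n$ is bounded and $\rho^{\leftarrow}(w_n^{-1})\ge c\,w_n^{1/\alpha-\kappa}$, while $\sum_{j\ge1}\mathbb{P}(2\delta w_n<\Gamma_j\le2w_n)=2(1-\delta)w_n$, so the contribution is $O(w_n^{1-2/\alpha+2\kappa})\to0$. On the last set, the auxiliary assumption \eqref{eqasssum} gives $\rho^{\leftarrow}(y)\le Cy^{-1/\alpha_0}$ for $y$ large (and $\rho^{\leftarrow}$ is bounded on the remaining bounded range of arguments), which together with $\rho^{\leftarrow}(w_n^{-1})\ge c\,w_n^{1/\alpha-\kappa}$ and $\sum_{j\ge1}\E[\Gamma_j^{-2/\alpha_0}\mathbf{1}_{\{\Gamma_j>2w_n\}}]=\int_{2w_n}^{\infty}x^{-2/\alpha_0}\,dx=O(w_n^{1-2/\alpha_0})$ bounds this contribution by $O(w_n^{1-2/\alpha+2\kappa})\to0$ as well (using $\alpha,\alpha_0\in(0,2)$ and $\kappa$ small; the finitely many $n$ with $w_n^{-1}>\delta$ cause no problem, as for each such fixed $n$ the sum is finite and vanishes as $\ell\to\infty$). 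This gives (ii), and combined with (i) it proves \eqref{propeq1}. The one genuinely delicate point is this last three-range estimate, in which the two Potter exponents (of $\rho^{\leftarrow}$ at $0$ and at $\infty$) and the boundedness of $\rho^{\leftarrow}$ off $0$ and $\infty$ must be balanced so that every contribution except the $\ell$-summable first one already decays in $n$; the remainder is bookkeeping.
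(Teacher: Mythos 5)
Your proof is correct and its skeleton coincides with the paper's: the same split into a fixed-$\ell$ comparison (your step (i), the paper's \eqref{lemsum2}) and a tail truncation (your step (ii), the paper's \eqref{lemsum1}), and for (i) the same key identity expressing $\gamma_n L_{j,n}((t-V_{j,n})_+)$ as a sum of $\nu_{j,n}$ standard exponentials, followed by a Chebyshev/Wald bound using $\mathbb{E}[\nu_{j,n}]=O(\gamma_n)$. The genuine difference is in (ii): the paper disposes of the tail by citing relation (66) of \cite{bai2020functional}, whereas you give a self-contained second-moment proof. Your argument --- orthogonality from the Rademacher signs, the Green's-function bound $\sum_{m\le n}u_m=O(n/w_n)$ yielding $\mathbb{E}[K_{j,n}(t)^2]\le C\gamma_n^2$, and the three-range analysis of $\mathbb{E}[g_n(\Gamma_j/2)^2]$ --- is sound; the three-range estimate is essentially a re-derivation of inequality (82) of \cite{bai2020functional}, which the paper itself invokes in the proof of Lemma \ref{lemmax}. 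What your route buys is self-containment, plus the observation that for the sum (unlike the maximum, where the union bound over $n$ time points forces an $r$-th moment with $r>2/(1-\beta)$) a plain second moment suffices, because the sum over $k$ is absorbed into $\mathbb{E}[K_{j,n}(t)^2]$ and cancels against the $\gamma_n^{-2}$ from the normalization. Two small imprecisions, neither fatal: the bound $\mathbb{E}[\nu_{j,n}((t-V_{j,n})_+)]\le\mathbb{E}[K_{j,n}(1)]+O(1)$ is only safe if the extension of $\tau_{j,n}$ beyond the last entrance in $\{1,\ldots,n\}$ is taken to be the continued Markov chain (the natural reading); in general one should simply bound it by $O(\gamma_n)$ via the same renewal-function estimate, which is all you actually use. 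Also, the ``middle difference'' $\nu_{j,n}((t-V_{j,n})_+)-\nu_{j,n}((\lfloor nt\rfloor/n-V_{j,n})_+)$ is in fact identically zero, since the renewal points and $V_{j,n}$ are multiples of $1/n$, so the weak-convergence argument you devote to it is unnecessary.
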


\begin{proof}[Proof of Lemma~{\upshape\ref{lemsum}}]
	Define the truncated version of $S^*_n(t)$ in \eqref{eqsum} as
	\begin{equation*}
	S^{*'}_{n,\ell}(t):=\sum_{k=1}^{\lfloor nt\rfloor}\sum_{j=1}^\ell\varepsilon_j\rho^{\leftarrow}(\Gamma_j/(2w_{n}))\mathbf{1}_{\{U^{(n)}_j\in T^{-k}A\}}.
	\end{equation*} 
By triangular inequalities,	(\ref{propeq1}) will follow once we show that
	\begin{equation}
	\lim_{\ell\to\infty}\limsup_{n{\to\infty}}\mathbb{P}\left(c_n^{-1}\left| S^*_n(t)-S^{*'}_{n,\ell}(t)\right|>\epsilon\right)=0, \label{lemsum1}
	\end{equation}
	and  for any fixed $\ell\in \bb{N}$, we have 
	\begin{equation}
	 \lim_{n{\to\infty}} \mathbb{P}\left(\left| c_n^{-1}S^{*'}_{n,\ell}(t)-\{\rho^{\leftarrow}(w_{n}^{-1})\}^{-1}S^{*}_{n,\ell}(t)\right|>\epsilon\right)=0. \label{lemsum2}
	\end{equation}
	
The relation \eqref{lemsum1}  is a special case of the relation (66) in \cite{bai2020functional}. A similar argument  will also be used   in the proof of \eqref{propmax1} below. So we omit the details here.

To prove \eqref{lemsum2}, using the relation $c_n=\rho^{\leftarrow}(w_n^{-1})\gamma_n$, where $\gamma_n$ is as in \eqref{eq:gamma_n}, it is enough to show that for each $j=1,\ldots,\ell$,
	\begin{equation*}
	\lim_{n\to\infty}\mathbb{P}\left(\frac{\rho^{\leftarrow}(\Gamma_j/(2w_{n}))}{\rho^{\leftarrow}(1/w_{n})}\left|\gamma_n^{-1}\sum_{k=1}^{\floor {nt}}\mathbf{1}_{\{U^{(n)}_j\in T^{-k}A\}}-L_{j,n}\left((t-V_{j,n})_+\right)\right|>\epsilon\right)=0.
	\end{equation*}
	Note that  $\rho^{\leftarrow}(\Gamma_j/(2w_{n}))/\rho^{\leftarrow}(1/w_{n})\to (\Gamma_j/2)^{-1/\alpha}$ as $n\to\infty$ by the regular variation of $\rho^{\leftarrow}$. So it is sufficient to show that
	\begin{equation*}
	\lim_{n\to\infty}\mathbb{P}\left(\left|\gamma_n^{-1}\sum_{k=1}^{\floor {nt}}\mathbf{1}_{\{U^{(n)}_j\in T^{-k}A\}}-L_{j,n}\left((t-V_{j,n})_+\right)\right|>\epsilon\right)=0.\label{propeq5}
	\end{equation*}
	Denote\[Q_{j,n}(t):=\sum_{k=1}^{\floor{ nt}}\mathbf{1}_{\{U^{(n)}_j\in T^{-k}A\}}.\] Observe that   $Q_{j,n}(t)\geq 1$ 
 if and only if $V_{j,n}\leq \floor{nt}/n$ (or equivalently, $V_{j,n}\le t $), under which
	\begin{equation*}
	\tau_{j,n}(Q_{j,n}(t)-1)\leq t-V_{j,n}~\text{ and }~\tau_{j,n}(Q_{j,n}(t))>t-V_{j,n}.
	\end{equation*}
So 	 we have
\begin{align*} 
L_{j,n}\left((t-V_{j,n})_+\right)&=  \inf\{s\ge 0: \tau_{j,n}(N_{j,n}(s))> (t-V_{j,n})_+\}\\
&=\inf\{s\ge 0:\ N_{j,n}(s)=Q_{j,n}(t)\vee 1\} .
\end{align*}
Let  ${\{}T_{j,n}{(i)}{\}}_{i\in\mathbb{N}}$  denote the inter-arrival times of the Poisson process $N_{j,n}$. Then  ${\{}T_{j,n}{(i)}{\}}_{i\in\mathbb{N}}$  are iid exponential random variables with mean $\gamma_n^{-1}$  and
	\begin{equation*}
	L_{j,n}\left((t-V_{j,n})_+\right)=\sum_{i=1}^{Q_{j,n}(t) \vee 1 }T_{j,n}(i).
	\end{equation*}
	Then 
	\begin{align*}
	&\mathbb{P}\left(\left|\gamma_n^{-1}\sum_{k=1}^{\floor{nt}}\mathbf{1}_{\{U^{(n)}_j\in T^{-k}A\}}-L_{j,n}\left((t-V_{j,n})_+\right)\right|>\epsilon\right)\\
	\leq  &\mathbb{P}\left(\left|\frac{Q_{j,n}(t)}{\gamma_n}-\sum_{i=1}^{Q_{j,n}(t)}T_{j,n}{(i)}\right|>\epsilon\right)+ \mathbb{P}\left(T_{j,n}{(1)}>\epsilon\right) .
	\end{align*}	
    Note that $\lim_n\mathbb{P}(T_{j,n}{(1)}>\epsilon)=0$ since $\gamma_n\rightarrow \infty$ as $n\rightarrow\infty$. Furthermore, by Chebyshev's inequality, independence, and recalling $(w_n)\in \mathrm{RV}_\infty(1-\beta)$, $(\gamma_n)\in \mathrm{RV}_\infty( \beta)$ and \eqref{eq:U entrance}, we have
	\begin{equation*}
	\mathbb{P}\left(\left|\frac{Q_{j,n}(t)}{\gamma_n}-\sum_{i=1}^{Q_{j,n}(t)}T_{j,n}{(i)}\right|>\epsilon\right)\le \frac{\mathbb{E}(Q_{j,n}(t))\mathrm{Var}(T_{j,n}{(1)})}{\epsilon^2}  \leq   \frac{(n/w_{n})\gamma_n^{-2}}{\epsilon^2},
	\end{equation*}
which converges to $0$	as $n\rightarrow\infty$ since the last expression belongs to $\mathrm{RV}_\infty(-\beta)$.
\end{proof}

\begin{lemma}\label{lemmax}
	For any nonempty open interval $B\subset [0,1]$ and for any $\epsilon>0$,
	\begin{equation}
	\lim_{\ell\to\infty}\limsup_{n\to\infty}\mathbb{P}\left(b_n^{-1}\left|M^*_{n}(B)-M^*_{n,\ell}(B)\right|>\epsilon\right)=0,\label{propeq2}
	\end{equation}
where $b_n=\rho^{\leftarrow}(w_n^{-1})$, $M^*_{n}$ is as in \eqref{eqmax} and $M^*_{n,\ell}$ is as in \eqref{intmax}.
\end{lemma}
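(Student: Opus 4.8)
The plan is to reduce Lemma~\ref{lemmax} to a single negligibility estimate for the ``small'' coefficients $j>\ell$, together with an easy localization step. Set $c_{j,n}:=\rho^{\leftarrow}(\Gamma_j/(2w_{n}))$ and $S_{k,n}:=\{j\in\mathbb{N}:U^{(n)}_j\in T^{-k}A\}$, so that by \eqref{eqmax}, $M^*_n(B)=\max_{k\in nB\cap\mathbb{N}}\sum_{j\in S_{k,n}}\varepsilon_j c_{j,n}$; and since $k/n\in I_{S,n}$ is equivalent to $S\subseteq S_{k,n}$, the representation \eqref{intmax} reads $M^*_{n,\ell}(B)=\max_{k\in nB\cap\mathbb{N}}\sum_{j\in S_{k,n},\,j\le\ell,\,\varepsilon_j=1}c_{j,n}$. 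Let
\begin{equation*}
\Delta_{n,\ell}:=\max_{k\in nB\cap\mathbb{N}}\Bigl|\sum_{\substack{j>\ell\\ j\in S_{k,n}}}\varepsilon_j c_{j,n}\Bigr|.
\end{equation*}
Deleting the negative-sign terms with $j\le\ell$ only increases each inner sum, so the elementary inequality $|\max_k a_k-\max_k b_k|\le\max_k|a_k-b_k|$ (cf.\ \eqref{th1eq3}) gives $M^*_n(B)\le M^*_{n,\ell}(B)+\Delta_{n,\ell}$ for all $n,\ell$. For the matching lower bound, fix a maximizer $S^+=S^+_{n,\ell}$ in \eqref{intmax}; we may take $S^+\subseteq\{j\le\ell:\varepsilon_j=1\}$, and then $I_{S^+,n}\cap B\ne\emptyset$ (using the convention $I_{\emptyset,n}=n^{-1}\{1,\ldots,n\}$ of \eqref{eq:I_S,n}). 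Let $G_{n,\ell}$ be the event that some $k\in nB\cap\mathbb{N}$ satisfies $k/n\in I_{S^+,n}$ and $k/n\notin\widetilde R_{j,n}$ for every $j\le\ell$ with $\varepsilon_j=-1$. On $G_{n,\ell}$, at such a $k$ the set $S_{k,n}\cap\{1,\ldots,\ell\}$ carries no negative sign, and by maximality of $S^+$ it contains no positive-sign index outside $S^+$; hence $\sum_{j\in S_{k,n},\,j\le\ell}\varepsilon_j c_{j,n}=M^*_{n,\ell}(B)$, so that $M^*_n(B)\ge M^*_{n,\ell}(B)-\Delta_{n,\ell}$. Combining, on $G_{n,\ell}$ one has $b_n^{-1}|M^*_n(B)-M^*_{n,\ell}(B)|\le b_n^{-1}\Delta_{n,\ell}$, and therefore
\begin{equation*}
\mathbb{P}\bigl(b_n^{-1}|M^*_n(B)-M^*_{n,\ell}(B)|>\epsilon\bigr)\le\mathbb{P}\bigl(b_n^{-1}\Delta_{n,\ell}>\epsilon\bigr)+\mathbb{P}(G_{n,\ell}^c).
\end{equation*}

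The event $G_{n,\ell}^c$ is rare. Indeed $S^+$, and hence the finite set $I_{S^+,n}\cap B$, depends only on $(\Gamma_j,\varepsilon_j)_{j\le\ell}$ and the ranges $(\widetilde R_{j,n})_{j\le\ell,\,\varepsilon_j=1}$, which are independent of the ranges $(\widetilde R_{j,n})_{j\le\ell,\,\varepsilon_j=-1}$. Taking $p$ to be the smallest point of $I_{S^+,n}\cap B$, a point of $n^{-1}\{1,\ldots,n\}$, and conditioning on the former data, \eqref{eq:U entrance} gives $\mathbb{P}(p\in\widetilde R_{j,n}\,|\,\cdot)=w_n^{-1}$ for each $j\le\ell$ with $\varepsilon_j=-1$; hence $\mathbb{P}(G_{n,\ell}^c)\le\sum_{j=1}^{\ell}\mathbb{P}\bigl(\varepsilon_j=-1,\,p\in\widetilde R_{j,n}\bigr)\le\ell\,w_n^{-1}\to0$ as $n\to\infty$, for each fixed $\ell$. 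Consequently, Lemma~\ref{lemmax} follows once we establish the tail estimate
\begin{equation}
\lim_{\ell\to\infty}\limsup_{n\to\infty}\mathbb{P}\bigl(b_n^{-1}\Delta_{n,\ell}>\epsilon\bigr)=0,\qquad\epsilon>0.\label{propmax1}
\end{equation}

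I expect \eqref{propmax1} to be the main obstacle; note that $b_n^{-1}\Delta_{n,\ell}$ does not vanish for a fixed $\ell$, only the iterated limit does. Two naive routes fail. First, for $\alpha\ge1$ one cannot bound $|\sum_{j>\ell,\,j\in S_{k,n}}\varepsilon_j c_{j,n}|$ by $\sum_{j>\ell,\,j\in S_{k,n}}c_{j,n}$, since the latter divided by $b_n$ is of the order $\sum_m\rho^{\leftarrow}(m)/\rho^{\leftarrow}(w_n^{-1})$, which does not tend to $0$; the cancellation supplied by the i.i.d.\ Rademacher signs must be used. Second, a union bound over the up to $n$ indices $k\in nB\cap\mathbb{N}$ is too lossy: the conditional second moment of one inner sum equals $w_n^{-1}\sum_{j>\ell}\mathbb{E}[c_{j,n}^2]$, which is $o(b_n^2)$ because $\mathbb{E}[c_{j,n}^2]=O(b_n^2\,\mathbb{E}[(\Gamma_j/2)^{-2/\alpha}])$ and $\sum_{j>\ell}\mathbb{E}[(\Gamma_j/2)^{-2/\alpha}]<\infty$ for $\alpha<2$, but it is not $o(b_n^2/n)$. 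My plan is to split $j>\ell$ at a large level $L$. For the finitely many moderate indices $\ell<j\le L$: with probability tending to $1$ no point $k/n$ lies in more than $\ell_\beta$ of the sets $\widetilde R_{j,n}$ with $\ell<j\le L$ (by \cite[Corollary B.3]{samorodnitsky2019extremal} and the convergence $I_{S,n}\Rightarrow I_S$ of \cite[Theorem 5.4]{samorodnitsky2019extremal}), so this part of $\Delta_{n,\ell}$ is at most $\sum_{i=1}^{\ell_\beta}c_{\ell+i,n}$, whose $b_n$-normalization converges to $\sum_{i=1}^{\ell_\beta}(\Gamma_{\ell+i}/2)^{-1/\alpha}\to0$ as $\ell\to\infty$. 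For the far tail $j>L$, one adapts the argument behind relation (66) of \cite{bai2020functional} — which proves the corresponding small-jump negligibility for the partial-sum process via a moment estimate exploiting the conditional symmetry of $\sum_{j>L}\varepsilon_j c_{j,n}\mathbf{1}_{\{j\in S_{k,n}\}}$ together with the control on the number and overlap of entrance times coming from the regularly-varying renewal structure of $\varphi_A$ — but now carried out uniformly over $k$, using that each $\widetilde R_{j,n}$ consists of only $O(nw_n^{-1})=o(n)$ points, and then sending $L\to\infty$. Producing this uniform-in-$k$ moment bound while retaining the sign cancellation is where the argument genuinely departs from the partial-sum case, and I expect it to be the bulk of the technical work.
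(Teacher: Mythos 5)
Your reduction of the lemma to the single tail estimate \eqref{propmax4} is sound, and your treatment of the ``localization'' half is a legitimate alternative to the paper's. Where the paper compares $M^*_{n,\ell}$ with the signed truncation $M^{*\prime}_{n,\ell}$ of \eqref{eq:M prime} by invoking the exact-covering sets $I^*_{S,n}$ and Lemma~5.5 of \cite{samorodnitsky2019extremal} (showing the two agree off an event $A_n(B)$ of vanishing probability), you work only with a measurable maximizer $S^+$ of \eqref{intmax} and show directly, using independence of the positive- and negative-sign ranges and \eqref{eq:U entrance}, that the first point of $I_{S^+,n}\cap B$ avoids all negative-sign ranges except on an event of probability at most $\ell\, w_n^{-1}\to 0$. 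This is more self-contained and perfectly adequate; your two-sided sandwich $|M^*_n(B)-M^*_{n,\ell}(B)|\le \Delta_{n,\ell}$ on $G_{n,\ell}$ checks out.

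The genuine gap is that the main estimate \eqref{propmax4} is not proved: for the far tail $j>L$ you only announce that one should ``adapt relation (66) of \cite{bai2020functional} uniformly over $k$'' and concede this is the bulk of the work. Worse, the reason you give for abandoning the direct route --- that a union bound over the $\le n$ values of $k$ is ``too lossy'' --- rests on a second-moment computation only, and that is exactly the wrong diagnosis. The paper closes this step by the union bound combined with a Markov inequality at exponent $r>2/(1-\beta)$ and the (conditional) Khinchine inequality: the point is that the summands carry the indicator $\mathbf{1}_{\{U^{(n)}_j\in T^{-k}A\}}$, whose success probability is $w_n^{-1}$, so the relevant $r$-th moment is controlled by $\bigl(w_n^{-1}\sum_{j>\ell}\mathbb{E}[(c_{j,n}/b_n)^2]\bigr)^{r/2}\le C\, w_n^{-r/2}\bigl(\sum_{j>\ell}j^{-2\gamma}\bigr)^{r/2}$ (using \eqref{eqasssum} via inequality (82) of \cite{bai2020functional}); the factor $w_n^{-r/2}$ beats the factor $n$ from the union bound precisely because $r(1-\beta)/2>1$, cf.\ \eqref{expchoi}, while the $\ell\to\infty$ limit kills the remaining sum. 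So the sparsity of the indicators must enter through a moment of order strictly larger than $2$, not through refined control of overlaps of the ranges; your proposed split at a level $L$ and the multiplicity bound $\ell_\beta$ handle only finitely many indices and leave the infinite tail untouched. (If you do pursue the high-moment route yourself, be aware that the step from $\mathbb{E}\bigl[(\sum_j (c_{j,n}/b_n)^2\mathbf{1}_{\{U^{(n)}_j\in T^{-k}A\}})^{r/2}\bigr]$ to $\bigl(\sum_j \mathbb{E}[(c_{j,n}/b_n)^2]w_n^{-1}\bigr)^{r/2}$ for $r>2$ is itself the delicate point and needs justification beyond Jensen; this is where the real work of the lemma lies.)
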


\begin{proof}[Proof of Lemma~{\upshape\ref{lemmax}}]
	Define
	\begin{equation}\label{eq:M prime}
	M^{*'}_{n,\ell}(B):=\max_{k\in nB\cap\mathbb{N}}\sum_{j=1}^\ell\varepsilon_j\rho^{\leftarrow}(\Gamma_j/(2w_{n}))\mathbf{1}_{\{U^{(n)}_j\in T^{-k}A\}}
	\end{equation}
	where $\max_\emptyset=-\infty$. The conclusion \eqref{propeq2} follows once we show that
	\begin{equation}
	\lim_{\ell\to\infty}\limsup_{n{\to\infty}}\mathbb{P}\left(b_n^{-1}\left| M^*_{n}(B)-M^{*'}_{n,\ell}(B)\right|>\epsilon\right)=0, \label{propmax1}
	\end{equation}
	and for any fixed $\ell\in \bb{N}$ that
	\begin{equation}
 \lim_{n{\to\infty}}\mathbb{P}\left( b_n^{-1}\left|M^{*'}_{n,\ell}(B)-M^*_{n,\ell}(B)\right|>\epsilon\right)=0.\label{propmax2}
	\end{equation}
	
	To show \eqref{propmax1}, by \eqref{th1eq3}, it is enough to show that
	\begin{equation}
	\lim_{\ell\to\infty}\limsup_{n{\to\infty}}\mathbb{P}\left(b_n^{-1}\max_{k\in nB\cap\mathbb{N}}\left|\sum_{j=\ell+1}^\infty\varepsilon_j\rho^{\leftarrow}(\Gamma_j/(2w_{n}))\mathbf{1}_{\{U^{(n)}_j\in T^{-k}A\}}\right|>\epsilon\right)=0.\label{propmax4}
	\end{equation}
By a union bound and a Markov inequality,	the  probability  in \eqref{propmax4} is bounded above by
	\begin{align*}
	&n\mathbb{P}\left(b_n^{-1}\left|\sum_{j=\ell+1}^\infty\varepsilon_j\rho^{\leftarrow}(\Gamma_j/(2w_{n}))\mathbf{1}_{\{U^{(n)}_j\in T^{-k}A\}}\right|>\epsilon\right)\\
	\leq& n\epsilon^{-r}\mathbb{E}\left(b_n^{-1}\left|\sum_{j=\ell+1}^\infty\varepsilon_j\rho^{\leftarrow}(\Gamma_j/(2w_{n}))\mathbf{1}_{\{U^{(n)}_j\in T^{-k}A\}}\right| \right)^r\\
	\leq& n\epsilon^{-r}C\left( \sum_{j=\ell+1}^\infty\mathbb{E} \left[\frac{\rho^{\leftarrow}(\Gamma_j/(2w_{n}))^2}{\rho^{\leftarrow}(w_{n}^{-1})^2} \right]\mathbb{P}(  U^{(n)}_j\in T^{-k}A )\right)^{r/2}
	\end{align*}
	for some $r>0$ to be specified later, where $C$ is a constant  depending only  on $r$ (throughout $C$ denotes a generic positive constant that may change each time), and the last {in}equality follows from independence and the  Khinchine inequality for  Rademacher random variables. Using the inequality (82) in the proof of equation (66) in \cite{bai2020functional},  for $j$ large enough,
	\begin{equation*}
	\mathbb{E} \left[\frac{\rho^{\leftarrow}(\Gamma_j/(2w_{n}))^2}{\rho^{\leftarrow}(w_{n}^{-1})^2} \right]\leq C\mathbb{E}\left((\Gamma_j/2)^{-1/\alpha_0}+(\Gamma_j/2)^{-(1/\alpha)-\delta}\right)^2\leq Cj^{-2\gamma},
	\end{equation*} 	
	for some positive constants $C$ and $\delta$, where $\alpha_0\in (0,2)$ is as in \eqref{eqasssum} and $\gamma:=\min\{1/\alpha_0,1/\alpha+\delta\}$. Using also $\mathbb{P}(  U^{(n)}_j\in T^{-k}A )=w_{n}^{-1}$, we have
	\begin{align}
	&\mathbb{P}\left(b_n^{-1}\max_{k\in nB\cap\mathbb{N}}\left|\sum_{j=\ell+1}^\infty\varepsilon_j\rho^{\leftarrow}(\Gamma_j/(2w_{n}))\mathbf{1}_{\{U^{(n)}_j\in T^{-k}A\}}\right|>\epsilon\right)\notag\\
	\leq &n\epsilon^{-r}Cw_{n}^{-r/2}\sum_{j=\ell+1}^\infty j^{-2\gamma}.\label{eq:max tri approx}
	\end{align}
Recall  $(w_n)\in \mathrm{RV}_\infty(1-\beta)$. With the choice $r > 2/(1-\beta)$, we ensure
	\begin{equation}
	\lim_{n\to\infty}n w_{n}^{-r/2}=0,\label{expchoi}
	\end{equation}
	and hence (\ref{propmax4}) follows from \eqref{eq:max tri approx} and  \eqref{expchoi} since ${2\gamma>1}$.

Now we show \eqref{propmax2}. 
	Recall $\wt{R}_{j,n}$ in \eqref{eq:R tilde j,n} and $I_{S,n}$ in \eqref{eq:I_S,n}. Writing $S^c:=\{1,\cdots,\ell\}\setminus S$, introduce
	\begin{equation*}
	I^*_{S,n}:=I_{S,n}\cap\left(\bigcap_{j\in S^c}  \wt{R}_{j,n}^c\right).
	\end{equation*}
	So each (rescaled) time point in $I^*_{S,n}$ is exactly contained by those $\wt{R}_{j,n}$ with $j\in S$.
	    Define the event
%
 \begin{equation*}
	A_n(B)=\bigcup_{S\subset\{1,\cdots,\ell\}}\left(\{I_{S,n}\cap B\neq\emptyset\}\cap\{I^*_{S,n}\cap B=\emptyset\}\right).
	\end{equation*}
In view of Lemma 5.5 in \cite{samorodnitsky2019extremal}, 
we have  
\begin{equation*}\label{eq:M double prime = M prime}
M^{*'}_{n,\ell}(B)=M^*_{n,\ell}(B)  \text{ on }A_n(B)^c  \text{ with } \lim_{n\to\infty}\mathbb{P}(A_n(B))=0.
\end{equation*}
\eqref{propmax2} follows immediately.
\end{proof}

\begin{proof}[Proof of Theorem~{\upshape\ref{thmmain}}]
	The tightness holds on the product space if it holds on each marginal space. The tightness of $c_n^{-1}S_n$ on $J_1$ topology of $D[0,1]$ is proved in \cite{owada2015functional}; see also \cite{bai2020functional}. $b_n^{-1}M_n$ is automatically tight since the space $SM[0,1]$ is compact. In view of Proposition \ref{thmfdd} and Example \ref{eg:fdd}, we only need to show the convergence of  finite-dimensional distributions  in $T_0=[0,1]\times \mathcal{I} $ (recall $\cl{I}$ is the collection of all nonempty open subintervals of $[0,1]$)   using a triangular approximation argument  (Theorem 3.2 in \cite{billingsley1999convergence}).
{Note that as $\ell\rightarrow\infty$, $S_\ell(t)\rightarrow S(t)$ and $M_\ell(B)\rightarrow M(B)$ almost surely for any $t\in [0,1]$ and $B\in \mathcal{G}([0,1])$,}
where $S_{\ell}$ and $M_{\ell}$ defined in (\ref{trusum}) and (\ref{trumax}) are the truncated versions of $S$ and $M$, respectively. The triangular approximation argument  is then completed  by   \eqref{eq:equiv rep}, Proposition \ref{thmapprox},  Lemmas \ref{lemsum} and \ref{lemmax}.\\
\end{proof}	

\subsection{Dependence of limits in the infinite variance case}\label{sec:dep}
\begin{proof}[Proof of Proposition \ref{Pro:dep}]	
	It is enough to show that $S(1)$ and $M([0,1])$ are dependent. We shall do so by   showing the so-called tail dependence coefficient  
\begin{equation}\label{eq:tail dep coef}
\lim_{x\rightarrow\infty}\bb{P}(|S(1)|>x \ | \ M([0,1])>x)\neq 0.
\end{equation}

	Denote  \[{Z}_j:={(2{ C_\alpha})}^{1/\alpha}\varepsilon_jL_{j}(1-V_{j}) ,\quad  j\in \bb{N},
	\] which admits finite moments of any order since the marginal Mittag-Leffler law of $L_j(1)$ does.   
Then $S(1)=\sum_{j=1}^\infty {Z}_j\Gamma_j^{-1/\alpha}$.  By   \eqref{maxidx}, the ordering $\Gamma_1<\Gamma_2<\ldots$,  and the fact that $I_S\neq \emptyset$ almost surely only for any $|S|\le \ell_\beta$, one has  almost surely 
\[
M([0,1]) =\sup_{S\subset \bb{N}, 1\le |S|\le \ell_\beta} \left(\sum_{j\in S}\Gamma_j^{-1/\alpha}\right)=\sum_{j=1}^{\ell_\beta}\Gamma_j^{-1/\alpha}.
\]    We claim that, as $x\to\infty$, we have marginally 
\begin{equation}\label{eq:tail S(1)}
\mathbb{P}( |S(1)|>x)\sim \mathbb{P}( |{Z}_1|\Gamma_1^{-1/\alpha}>x)\sim x^{-\alpha} \mathbb{E}[|{Z}_1|^\alpha].
\end{equation}

To show the   relations above, applying the orthogonality $\mathbb{E}({Z}_i{Z}_j)=0$ for $i\neq j$ and independence,  we have for all $\ell$ large enough,  
\begin{equation*}
\mathbb{E}\left(\sum_{j=\ell}^\infty {Z}_j\Gamma_j^{-1/\alpha}\right)^2=\sum_{j=\ell}^\infty \mathbb{E}\left({Z}_j^2\Gamma_j^{-2/\alpha}\right)\le C \mathbb{E}\left({Z}_1^2\right)\sum_{j=\ell}^\infty j^{-2/\alpha}<\infty,
\end{equation*}
where $C$ is a generic positive constant, and we have applied  a bound for negative moments of $\Gamma_j$ (e.g, \cite[Equation (3.2)]{samorodnitsky1989asymptotic}) in the last step. Hence, a Markov inequality yields
\begin{equation}\label{eq:trunc tail}
\mathbb{P}\left(\left|\sum_{j=\ell}^\infty {Z}_j\Gamma_j^{-1/\alpha}\right|>x\right)\leq Cx^{-2}.
\end{equation}
Furthermore, for any fixed $j\in \bb{N}$,   by Breiman's lemma and an elementary calculation based on the gamma distribution of $\Gamma_j$, we have  as $x\rightarrow\infty$,
\begin{equation}\label{eq:AGamma tail}
\mathbb{P}(|{Z}_j|\Gamma_j^{-1/\alpha}>x)\sim \left(\bb{E} |{Z}_j|^{j\alpha}\right)  \bb{P}(\Gamma_j^{-1/\alpha}>x)\sim\frac{ \bb{E} |{Z}_j|^{j\alpha} }{ j!  }x^{-j\alpha}.
\end{equation}
Combining \eqref{eq:trunc tail} and \eqref{eq:AGamma tail} with  Lemma 4.2.4 of \cite{samorodnitsky2016stochastic}, the   relations in \eqref{eq:tail S(1)} follow.

On the other hand,   it  follows from  Proposition 3.3 in \cite{samorodnitsky2019extremal}   that as $x\to\infty$
	\begin{equation*}  
	\mathbb{P}(M([0,1])>x)\sim \mathbb{P}(\Gamma_1^{-1/\alpha}>x)\sim x^{-\alpha}.
	\end{equation*}
	
Next, we consider the joint tail probability.  For any $\epsilon\in (0,1)$, applying  union bounds, triangular inequalities and \eqref{eq:AGamma tail}, we have 
\begin{align*}
&\mathbb{P}\left( |S(1)|>x,M([0,1])>x\right)\\ \le &\mathbb{P}\left( |{Z}_1|\Gamma_1^{-1/\alpha}>(1-\epsilon)x,\ \Gamma_1^{-1/\alpha}>(1-\epsilon)x\right)+\bb{P}\left(\left|\sum_{j=2}^\infty {Z}_j\Gamma_j^{-1/\alpha}\right|>\epsilon x\right)+ \bb{P}\left(\sum_{j=2}^{\ell_\beta}\Gamma_{j}^{-1/\alpha}>\epsilon x\right)\\
=&\mathbb{P}\left( |{Z}_1|\Gamma_1^{-1/\alpha}>(1-\epsilon)x,\ \Gamma_1^{-1/\alpha}>(1-\epsilon)x\right)+ o(x^{-\alpha}) \quad \text{ as }x\rightarrow\infty,
\end{align*}
where the last relation is due to  \eqref{eq:trunc tail} and \eqref{eq:AGamma tail}.
Note that
\begin{align*}
&\mathbb{P}( |{Z}_1|\Gamma_1^{-1/\alpha}>x,\Gamma_1^{-1/\alpha}>x)\\
=& \mathbb{P}(  \mathbf{1}_{\{|{Z}_1|\leq 1\}} |{Z}_1|\Gamma_1^{-1/\alpha}>x)+\mathbb{P}( |{Z}_1|> 1)\mathbb{P}( \Gamma_1^{-1/\alpha}>x)\\
\sim&\left\{ \mathbb{E}[|{Z}_1|^\alpha\mathbf{1}_{\{|{Z}_1|\leq 1\}}]+\mathbb{P}(|{Z}_1|> 1)\right\}x^{-\alpha} 
\end{align*}
as $x\rightarrow\infty$, where   we have applied	Breiman's lemma in the last relation.
Combining the relations above we have
\begin{equation*}
\limsup_{x\to\infty}\frac{\mathbb{P}( |S(1)|>x,M([0,1])>x)}{\mathbb{P}( |{Z}_1|\Gamma_1^{-1/\alpha}>x,\Gamma_1^{-1/\alpha}>x)}\le(1-\epsilon)^{-\alpha}.
\end{equation*}
Similarly, a lower bound can be obtained by noting that for any $\epsilon\in (0,1)$, 
\begin{align*}
&\mathbb{P}\left( |S(1)|>x,M([0,1])>x\right)\ge \\ &\mathbb{P}\left( |{Z}_1|\Gamma_1^{-1/\alpha}>(1+\epsilon)x,\Gamma_1^{-1/\alpha}>(1+\epsilon)x\right)-\bb{P}\left(\left|\sum_{j=2}^\infty {Z}_j\Gamma_j^{-1/\alpha}\right|>\epsilon x\right),
\end{align*}	
where we have used the fact $\Gamma_1^{-1/\alpha}\le M([0,1])$ almost surely, and hence
\begin{equation*}
\liminf_{x\to\infty}\frac{\mathbb{P}( |S(1)|>x,M([0,1])>x)}{\mathbb{P}( |{Z}_1|\Gamma_1^{-1/\alpha}>x,\Gamma_1^{-1/\alpha}>x)}\ge(1+\epsilon)^{-\alpha}.
\end{equation*}
Letting $\epsilon\to 0$, we have
	\begin{equation*}
	\lim_{x\to\infty}\frac{\mathbb{P}( |S(1)|>x,M([0,1])>x)}{\left\{ \mathbb{E}[|{Z}_1|^\alpha\mathbf{1}_{\{|{Z}_1|\leq 1\}} ]+\mathbb{P}(|{Z}_1|> 1)\right\}x^{-\alpha}}=1.
	\end{equation*}
So returning to	\eqref{eq:tail dep coef}, we have
	\begin{equation*}
	\lim_{x\rightarrow\infty}\bb{P}(|S(1)|>x \ | \ M([0,1])>x)= \mathbb{E}[|{Z}_1|^\alpha\mathbf{1}_{\{|{Z}_1|\leq 1\}} ]+\mathbb{P}(|{Z}_1|> 1)\neq 0.
	\end{equation*}
\end{proof}

\subsection{Joint convergence on subordinators}

Recall the right-continuous inverse and the closed range   of a monotonic function is defined as in \eqref{eqlocal} and \eqref{eqrange} respectively. We need the following lemma:
\begin{lemma}\label{lemsub_det}
Suppose $ f,f_n : [0,\infty)\mapsto [0,\infty) $,  $n\in\mathbb{N}$ are non-decreasing unbounded   right-continuous   functions  with right-continuous inverses $f^{\rightarrow}$, $f_n^{\rightarrow}$ and closed ranges $F$ and $F_n$ respectively.   Suppose in addition that $f$ is  strictly increasing,   and the local uniform  convergence holds: for all $t\geq 0$, we have
	\begin{equation}
	\sup_{0\leq s\leq t}\big\lvert f_n(s)- f(s)\big\rvert\rightarrow 0 ~~\text{as }n\to\infty\label{propsub3_det}
	\end{equation}
 Then for all $y\in [0,\infty)$, we have
	\begin{equation}
	\sup_{0\leq x\leq y}\big\lvert f_n^{\rightarrow}(x)-f^{\rightarrow}(x)\big\rvert\rightarrow 0~~\text{as }n\to\infty\label{propsub1_det}
	\end{equation}
and for any sequence $x_n\rightarrow x\in \bb{R}$,
	\begin{equation}
 \left|\rho(x_n,F_n)-\rho(x,F)\right| \rightarrow 0~~\text{as }n\to\infty,\label{propsub2_det}
	\end{equation}
	  where $\rho(x,A) := \inf\{\lvert x -u\rvert : u\in A\}$ for a nonempty subset $A$   of $\bb{R}$.
\end{lemma}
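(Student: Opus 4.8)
The two assertions can be proved independently; \eqref{propsub1_det} is the substantive one and \eqref{propsub2_det} is routine once one knows \eqref{propsub1_det} is \emph{not} actually needed for it.

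For \eqref{propsub1_det}, first record two structural facts about $f^{\rightarrow}$. Since $f$ is unbounded, $f^{\rightarrow}$ is finite on $[0,\infty)$; and since $f$ is strictly increasing, $f^{\rightarrow}$ is continuous, because a jump of the non-decreasing function $f^{\rightarrow}$ at some $x_0$ would force $f\equiv x_0$ on a nondegenerate interval, contradicting strict monotonicity. Next I would prove the pointwise convergence $f_n^{\rightarrow}(x)\to f^{\rightarrow}(x)$ for each fixed $x\ge 0$. Write $t:=f^{\rightarrow}(x)$. Strict monotonicity of $f$ gives the two gap facts: (i) $f(s)<x$ for every $s<t$ (if instead $f(s)=x$ for some $s<t$, then $f>x$ on all of $(s,\infty)$, so $f^{\rightarrow}(x)\le s<t$, a contradiction), and (ii) $f(t+\epsilon)>f(t)\ge x$ for every $\epsilon>0$ (the inequality $f(t)\ge x$ follows from right-continuity of $f$, taking a sequence $s_k\downarrow t$ with $f(s_k)>x$). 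Fix $\epsilon>0$ with $\epsilon<t$ when $t>0$. By (ii) and the uniform convergence on $[0,t+\epsilon]$, we get $f_n(t+\epsilon)>x$ for all large $n$, hence $f_n^{\rightarrow}(x)\le t+\epsilon$; by (i) and the uniform convergence on $[0,t-\epsilon]$, we get $f_n(t-\epsilon)<x$ for all large $n$, hence by monotonicity $f_n(s)<x$ for $s\le t-\epsilon$ and so $f_n^{\rightarrow}(x)\ge t-\epsilon$. This yields $f_n^{\rightarrow}(x)\to t$. Finally, pointwise convergence of the non-decreasing functions $f_n^{\rightarrow}$ to the \emph{continuous} non-decreasing limit $f^{\rightarrow}$ upgrades to uniform convergence on every $[0,y]$ by the standard P\'olya-type argument: subdivide $[0,y]$ into finitely many pieces on which $f^{\rightarrow}$ varies by less than $\epsilon$, and sandwich $f_n^{\rightarrow}(x)$ between $f_n^{\rightarrow}$ evaluated at the endpoints of the piece containing $x$.

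For \eqref{propsub2_det}, the key observation is that the distance to a set equals the distance to its closure, so $\rho(x,F_n)=\inf_{s\ge 0}|f_n(s)-x|$ and $\rho(x,F)=\inf_{s\ge 0}|f(s)-x|$; in particular each of $x\mapsto\rho(x,F_n)$ and $x\mapsto\rho(x,F)$ is $1$-Lipschitz. Since $|\rho(x_n,F_n)-\rho(x,F)|\le|x_n-x|+|\rho(x,F_n)-\rho(x,F)|$, it suffices to prove the pointwise convergence $\rho(x,F_n)\to\rho(x,F)$ for each fixed $x\ge 0$. For $\limsup_n\rho(x,F_n)\le\rho(x,F)$: given $\eta>0$, pick $s^{*}$ with $|f(s^{*})-x|\le\rho(x,F)+\eta$, and use $f_n(s^{*})\to f(s^{*})$ to get $\rho(x,F_n)\le|f_n(s^{*})-x|\to|f(s^{*})-x|\le\rho(x,F)+\eta$. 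For $\liminf_n\rho(x,F_n)\ge\rho(x,F)$: given $\eta>0$, pick $s_n$ with $|f_n(s_n)-x|\le\rho(x,F_n)+\eta$; since $\rho(x,F_n)\le|f_n(0)-x|$ and $f_n(0)\to f(0)$, the quantities $f_n(s_n)$ are bounded, say $f_n(s_n)\le C$ for $n$ large, and choosing $T$ with $f(T)>C$ we get $f_n(T)>C\ge f_n(s_n)$ for large $n$, whence $s_n\le T$ by monotonicity of $f_n$; then $\rho(x,F)\le|f(s_n)-x|\le\sup_{[0,T]}|f_n-f|+\rho(x,F_n)+\eta$, and letting $n\to\infty$ then $\eta\to 0$ gives the bound.

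The main obstacle is the pointwise convergence of the inverses in the first part. The delicate situations are precisely those where $x$ equals a left-limit value $f(t-)$ or lies in a range-gap $(f(t-),f(t))$: here a merely non-decreasing $f$ could fail, and one must extract the strict inequalities $f(t-\epsilon)<x<f(t+\epsilon)$ from strict monotonicity in order to manufacture a gap that the uniform convergence $f_n\to f$ can exploit. The P\'olya upgrade to uniform convergence and the entirety of \eqref{propsub2_det} are routine.
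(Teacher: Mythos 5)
Your proof is correct, and it differs from the paper's in two places worth noting. For \eqref{propsub1_det} the paper simply observes that strict monotonicity of $f$ makes $f^{\rightarrow}$ continuous and then cites \cite[Corollary 13.6.4]{whitt2002stochastic} for the local uniform convergence of inverses; you prove the same two facts from scratch (the gap inequalities $f(t-\epsilon)<x$ and $f(t+\epsilon)>x$ at $t=f^{\rightarrow}(x)$, followed by the P\'olya upgrade), which is self-contained but establishes nothing beyond the cited result. The more substantive divergence is in the lower bound $\liminf_n\rho(x,F_n)\ge\rho(x,F)$ for \eqref{propsub2_det}: the paper argues via the gap structure of the range, distinguishing the cases $x<f(0)-\delta$ and $f(t-)+\delta<x<f(t)-\delta$ and using that uniform convergence transfers the left limits $f_n(t-)\to f(t-)$, whereas you pick near-minimizers $s_n$ of $\inf_{s\ge 0}|f_n(s)-x|$, show they remain in a fixed compact $[0,T]$ by unboundedness of $f$ plus monotonicity of $f_n$, and then pass the error through $\sup_{[0,T]}|f_n-f|$. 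Your route avoids any case analysis on where $x$ sits relative to the jumps of $f$ and is arguably cleaner; the reduction to $x_n\equiv x$ via the $1$-Lipschitz property of distance functions and the upper bound via a single near-minimizer $s^{*}$ are essentially identical to the paper's. One cosmetic remark: your identity $\rho(x,F_n)=\inf_{s\ge0}|f_n(s)-x|$ (distance to a set equals distance to its closure) is the same device the paper uses implicitly when it writes $\rho(x,F)=\lim_k|x-f(t_k)|$, so no generality is lost or gained there.
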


\begin{proof}

    Since $f$ is strictly increasing, $f^{\rightarrow}$ is continuous (see, e.g., \cite[Lemma 13.6.5]{whitt2002stochastic}).   The local uniform convergence \eqref{propsub1_det}  then   follows from  \cite[Corollary 13.6.4]{whitt2002stochastic}.   
 
Next we show \eqref{propsub2_det}.   Due to  the fact  $ |\rho(x_n,F_n)- \rho(x,F_n)|\le |x_n-x|$, it suffices to  prove the special case where $x_n\equiv x$.   { Next, note  that for any $x\in \bb{R}$, 
 since $f$ is non-decreasing and unbounded, we have $f(t)>x$ for some $t>0$ large enough. Then in view of the monotonicity of $f$, for such $t$  we have \[\rho(x,F)=\inf_{0\le s\le t}|x-f(s)|.\] 
On the other hand, since $f_n(t)\rightarrow f(t)$ as $n\rightarrow\infty$ implied by \eqref{propsub3_det}, we have $f_n(t)>x$  and hence
\[\rho(x,F_n)=\inf_{0\le s\le t}|x-f_n(s)|\]
 for all  $n$ large enough.   
Therefore   by   triangular inequalities, we can deduce the following relation   for the aforementioned $t>0$ and all $n$ large enough:
\begin{align*}
\rho(x,F)-\sup_{0\leq s\leq t}\left|f(s)-f_n(s)\right| \le \rho(x,F_n)   
\le 
\rho(x,F)+\sup_{0\leq s\leq t}\left|f(s)-f_n(s)\right|
\end{align*}
The proof is concluded by taking $n\rightarrow\infty$ and applying \eqref{propsub2_det} in the relation above.
 }

\end{proof}

\begin{proof}[Proof of Proposition~{\upshape\ref{thmsub}}] 
The key of the proof is following   coupling result from	  Theorem 15.17 in \cite{kallenberg:2002:foundations}: under the marginal convergence  $\sigma_n(1)\stackrel{d}{\to}\sigma(1)$ as $n\rightarrow\infty$, there exist   subordinators $\widetilde{\sigma}_n\stackrel{d}{=}\sigma_n$  such that  all $t\geq 0$,
\[
\Delta_{n}(t):=\sup_{0\le s\leq t}\left|\widetilde{\sigma}_n(s)-\sigma(s)\right|\stackrel{P}{\to}0
\] 
By     Skorokhod's representation, on a possibly extented    probability space, we can ensure $\widetilde{V}_n\stackrel{a.s.}{\to}\wt{V}$ for some random variables $\widetilde{V}_n\stackrel{d}{=}V_n$, $\wt{V} \stackrel{d}{=}V$, such that $\wt{V}_n$ and $\wt{V}$ are independent of $\wt{\sigma}_n$ and $\sigma$.  Now fix an arbitrary subsequence $S\subset \bb{N}$.  We claim that there exists further subsequence   $S'\subset S$  so that
\[
\bb{P}\left(\lim_{S'{\ni} n\rightarrow\infty}\Delta_{n}(t)=0  \text{ for all }t>0 \right) =1.
\]
Indeed, by the sub-sub-sequence property of convergence in probability, there exist  subsequences $S_k$, $k\in \bb{N}$,  satisfying $S\supset S_1\supset S_{2}\supset\ldots $ and $\lim_{S_k\ni n\rightarrow{\infty}}\Delta_{n}(k)=0$ almost surely for each $k\in \bb{N}$. Then desirable $S'$ can be formed by selecting one element from each $S_k$ in view of the monotonicity of $\Delta_n(t)$ in $t$.

Let $S':=\{n_k\}_{k\in\mathbb{N}}$. Let $\widetilde{L}_{n_k}$ be the right-continuous inverse (i.e., local time) of $\widetilde{\sigma}_{n_k}$ as in  \eqref{eqlocal}. Note that for each $y\geq 0$.
\begin{align*}
&\sup_{0\leq x\leq y}\left|\widetilde{L}_{n_k}((x-\widetilde{V}_{n_k})_+)-L((x-\wt{V})_+)\right|\\
\leq&\sup_{0\leq x\leq y}\left|\widetilde{L}_{n_k}((x-\widetilde{V}_{n_k})_+)-L((x-\widetilde{V}_{n_k})_+)\right|+\sup_{0\leq x\leq y}\left| L((x-\widetilde{V}_{n_k})_+)-L((x-\wt{V})_+)\right|\\
\leq& \sup_{0\leq x\leq y}\left|\widetilde{L}_{n_k}(x)-L(x)\right|+\sup_{0\leq x\leq y}\left| L((x-\widetilde{V}_{n_k})_+)-L((x-\wt{V})_+)\right|.
\end{align*}
So applying \eqref{propsub1_det} of Lemma \ref{lemsub_det}  and the uniform continuity of $L$  on $[0,y]$ to the above, we have
\begin{equation*}
\bb{P}\left(\lim_{k {\to\infty}}\sup_{0\leq x\leq y}\left|\widetilde{L}_{n_k}((x-\widetilde{V}_{n_k})_+)-L((x-\wt{V})_+)\right|=0\right)=1.  
\end{equation*}

Applying the conclusion \eqref{propsub2_det} in Lemma \ref{lemsub_det}, we have 
\begin{equation*}
\bb{P}\left( \lim_{k{\to\infty}} \rho(x,\widetilde{V}_{n_k}+\widetilde{R}_{n_k}) = \rho(x,\wt{V}+R)  
  \text{ for all }x\in [0,\infty) \right)=1,
\end{equation*}
 where $\widetilde{R}_{n_k}$ and $R$ denote the closed  range of $\widetilde{\sigma}_{n_k}$ and $\sigma$, respectively. 
 Since the convergence inside the probability expression above characterizes convergence under the Fell topology   (see, e.g., Theorem 2.2(iii) in \cite{salinetti1981convergence}), we have $\widetilde{V}_{n_k}+\widetilde{R}_{n_k}\rightarrow \wt{V}+R$ almost surely in  $\mathfrak{F}([0,\infty))$. 
The proof is then completed since we have shown a convergence-in-probability version of the conclusion based on the coupling.

\end{proof}

\section*{Declarations}

\subsection*{Availability of supporting data}
Data sharing not applicable to this article as no datasets were generated or analysed during the current study.
\subsection*{Competing interests}
The authors have no relevant financial or non-financial interests to disclose.
\subsection*{Funding}
 No funding was received for conducting this study

\subsection*{Authors' contributions}
The authors contributed equally to this work.

\printbibliography

\end{document}